\newtheorem{thm}{Theorem}[section]
\newtheorem{lem}[thm]{Lemma}
\newtheorem{cor}[thm]{Corollary}
\theoremstyle{definition}
\newtheorem{defn}[thm]{Definition}
\newtheorem{exa}[thm]{Example}
\newtheorem{rmk}[thm]{Remark}
\newtheorem{notn}[thm]{Notation}
\newtheorem{para}[thm]{}
\numberwithin{equation}{thm}
\newcommand{\RomII}{\uppercase\expandafter{\romannumeral 2}}
\newcommand{\bC}{\mathbb C}
\newcommand{\bZ}{\mathbb Z}
\newcommand{\bN}{\mathbb N}
\newcommand{\bP}{\mathbb P}
\newcommand{\bR}{\mathbb R}
\newcommand{\bV}{\mathbb V}
\newcommand{\li}{1, 2, \dots, l}
\newcommand{\sli}{\{\li\}}
\newcommand{\cA}{\mathcal A}
\newcommand{\cC}{\mathcal C}
\newcommand{\cE}{\mathcal E}
\newcommand{\cF}{\mathcal F}
\newcommand{\cH}{\mathcal H}
\newcommand{\cO}{\mathcal O}
\newcommand{\cV}{\mathcal V}
\newcommand{\ca}{complex analytic }
\newcommand{\snc}{simple normal crossing }
\DeclareMathOperator{\gr}{Gr}
\DeclareMathOperator{\id}{id}
\DeclareMathOperator{\image}{Image}
\DeclareMathOperator{\kernel}{Ker}
\DeclareMathOperator{\res}{Res}
\DeclareMathOperator{\shom}{\cH {\it om}}
\DeclareMathOperator{\fvb}{FiltBun}
\DeclareMathOperator{\fpvhs}{FPVHS}
\DeclareMathOperator{\gpfmhs}{GrPFMHS}
\begin{document}

\title{A remark on semipositivity theorems}
\author{Taro Fujisawa
\thanks{Tokyo Denki University, fujisawa@mail.dendi.ac.jp}}
\maketitle

\begin{abstract}
We propose a new class of filtered vector bundles,
which is related to variation of (mixed) Hodge structures
and give a slight generalization
of the Fujita--Zucker--Kawamata semipositivity theorem.
\end{abstract}

\section*{Introduction}

The aim of this article
is to give a remark
on semipositivity theorems, Theorem 1.8 and Theorem 4.5
in \cite{Brunebarbe},
which are generalizations of the Fujita--Zucker--Kawamata
semipositivity theorem.
(cf. \cite{FujitaKFSC},
\cite{ZuckerRTF},
\cite{KawamataCAV},
\cite{Fujino-Fujisawa},
\cite{FujinoFujisawaSaito} etc.)
In fact,
Theorem 1.8 is a corollary of Theorem 4.5 in \cite{Brunebarbe}.
However, Example \ref{exa:2} below shows that
Theorem 4.5 of \cite{Brunebarbe} is false.
In this article,
we prove another semipositivity theorem,
Theorem \ref{thm:2},
and recover Theorem 1.8 of \cite{Brunebarbe}
as its corollary.
Here the author would like to mention the article
\cite{FujinoFujisawaSPT},
in which Theorem 1.8 of \cite{Brunebarbe}
is recovered and generalized from the analytic viewpoint.
Also, \cite{BrunebarbeSPHB}
treats a generalization of Theorem 1.8 of \cite{Brunebarbe}.

In this article, we adopt
the same strategy as in \cite{KawamataCAV}, \cite{Fujino-Fujisawa}
and \cite{Brunebarbe},
which uses some properties of the degeneration
of a polarized variation of $\bR$-Hodge structures.
In this approach,
there exists the following difficulty:
Even if we start from a polarized variation of Hodge structures,
the objects which appear as its degeneration
are not necessarily variations of mixed Hodge structures
as explained in \ref{para:4} below.
Therefore it is not sufficient
to consider a polarized variation of (mixed) Hodge structures.
This is the reason
why the category $\gpfmhs(X,D)_{\bR}$ was introduced
in \cite{Brunebarbe}.
However the notion of the category $\gpfmhs(X,D)_{\bR}$
contains a problem
as shown by Example \ref{exa:1} below.
In this article,
we propose a new category
$\fpvhs(X,D)_{\bR}$
and prove the semipositivity theorem
for its object.

For the proof of the semipoistivity theorem
by using the inductive argument as above,
the key is the construction of the restriction functor.
The idea of Dr. Brunebarbe is
to use the refinement of the weight filtration.
However, taking the refinement
breaks the functoriality for the weight filtration.
In order to overcome this problem,
he uses the huge index set $\bZ^{\infty}$
as in \cite[Section 4]{Brunebarbe},
which causes other problem
as in Example \ref{exa:1}.
In this article,
we change the category as follows:
In contrast to the category $\gpfmhs(X,D)_{\bR}$
in \cite[Definition 4.1]{Brunebarbe}
whose object is a filtered vector bundle $(\cV,F)$
{\itshape equipped with} an extra data $(W, \dots)$,
an object of the category $\fpvhs(X,D)_{\bR}$
is a filtered vector bundle $(\cV, F)$
which {\itshape admits} an extra data $(W, \dots)$.
Thus the category $\fpvhs(X,D)_{\bR}$
is defined as a full subcategory of the filtered vector bundles on $X$.
This definition implies
that a morphism in the category $\fpvhs(X,D)_{\bR}$
has no constraint on the weight filtration $W$.
Thus the restriction considered in Section \ref{sec:restriction-functor}
becomes a functor.

For an object $(\cV, F, W, \dots)$ of $\gpfmhs(X,D)_{\bR}$,
a subbundle $A$ of $\gr_F\cV$
is considered
in Theorem 4.5 of \cite{Brunebarbe}.
However the assumption that $\gr^WA$ is contained
in the kernel of the Higgs filed
associated to $(\cV, F, W, \dots)$
is not preserved by restricting to the subvariety of $X$.
This phenomena violates the proof of Theorem 4.5 in \cite{Brunebarbe}.
In Theorem \ref{thm:2} below,
a quotient bundle $A$ of $\gr_F\cV$
is considered for an object $(\cV,F)$ of $\fpvhs(X,D)_{\bR}$.
Then the assumption on $A$
concerning the Higgs field associated to $(\cV,F)$
is preserved by the restriction
and the inductive argument gives us
the proof of the semipositivity theorem.

This article is organized as follows.
Section \ref{sec:preliminaries}
treats preliminary facts about filtrations.
We introduce the notion of a refinement of a filtration
and prove several lemmas and corollaries concerning it.
These play important role
in Sections \ref{sec:restriction-functor}
and \ref{sec:semip-theor},
although these are of technical nature.
In the first half of Section \ref{sec:category-fpvhsx-d_br},
the ambiguities in \cite[Section 5]{Fujino-Fujisawa}
are fixed at this occasion.
Next, the category $\fpvhs(X,D)_{\bR}$ is defined
for a log pair $(X,D)$.
In Section \ref{sec:restriction-functor},
the restriction functor to a stratum of the boundary $D$
is constructed.
Then the main theorem of this article,
Theorem \ref{thm:2}, is proved in Section \ref{sec:semip-theor}.

The author would like to express his gratitude
to Dr. Yohan Brunebarbe.
The construction of the restriction functor
in Section \ref{sec:restriction-functor}
is essentially the same as his original idea.
The author learned it
from the discussion with himself.
The author would like to thank Professor Osamu Fujino
for his helpful advice and encouragement.

\section{Preliminaries}
\label{sec:preliminaries}

\begin{para}
In this section,
we collect elementary facts
concerning filtrations.
Throughout this section,
$X$ denotes a \ca space.
\end{para}

\begin{para}
Let $\cF$ and $\cV$ be coherent $\cO_X$-modules.
For a finite decreasing filtration $F$ on $\cV$,
the filtration $F$ on $\cF \otimes \cV$
is defined by
\begin{equation}
\label{eq:1}
F^p(\cF \otimes \cV)
=\image(\cF \otimes F^p\cV \longrightarrow \cF \otimes \cV)
\end{equation}
for every $p$.
Then there exist the canonical morphisms
\begin{equation}
\label{eq:10}
\cF \otimes \gr_F^p\cV
\longrightarrow
\gr_F^p(\cF \otimes \cV)
\end{equation}
for all $p$,
which is an isomorphism if $\gr_F^p\cV$ is locally free for all $p$.

Let $G$ be another finite decreasing filtration on $\cV$.
Then $G$ induces
the filtrations $G$ on $\gr_F^p\cV$ and on $\cF \otimes \cV$,
which induce the filtrations $G$ on $\cF \otimes \gr_F^p\cV$
and on $\gr_F^p(\cF \otimes \cV)$ respectively.
The morphism \eqref{eq:10}
preserves the filtration $G$ on the both sides.
Similarly, there exists the canonical morphism
\begin{equation}
\label{eq:17}
\cF \otimes \gr_G^q\cV
\longrightarrow
\gr_G^q(\cF \otimes \cV)
\end{equation}
preserving the filtration $F$ on the both sides.
Moreover, there exist the canonical morphisms
\begin{equation}
\label{eq:18}
\begin{split}
&\cF \otimes \gr_F^p\gr_G^q\cV
\longrightarrow
\gr_F^p(\cF \otimes \gr_G^q\cV), \\
&\cF \otimes \gr_G^q\gr_F^p\cV
\longrightarrow
\gr_G^q(\cF \otimes \gr_F^p\cV), \\
&\gr_F^p(\cF \otimes \gr_G^q\cV)
\longrightarrow
\gr_F^p\gr_G^q(\cF \otimes \cV), \\
&\gr_G^q(\cF \otimes \gr_F^p\cV)
\longrightarrow
\gr_G^q\gr_F^p(\cF \otimes \cV)
\end{split}
\end{equation}
for any $p,q$.
\end{para}

\begin{lem}
\label{lem:5}
The diagram
\begin{equation}
\begin{CD}
\cF \otimes \gr_G^q\gr_F^p\cV
@>>>
\gr_G^q(\cF \otimes \gr_F^p\cV)
@>>>
\gr_G^q\gr_F^p(\cF \otimes \cV) \\
@VVV @. @VVV \\
\cF \otimes \gr_F^p\gr_G^q\cV
@>>>
\gr_F^p(\cF \otimes \gr_G^q\cV)
@>>>
\gr_F^p\gr_G^q(\cF \otimes \cV)
\end{CD}
\end{equation}
is commutative for all $p,q$,
where the horizontal arrows are the ones in \eqref{eq:18},
the left vertical arrow is the one induced from
the canonical morphism for two filtrations
and the right vertical arrow is the canonical morphism
for two filtrations.
\end{lem}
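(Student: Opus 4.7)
The plan is to verify commutativity by tracing local sections. Since all six objects in the diagram are coherent $\cO_X$-modules and the arrows are defined canonically and functorially, the assertion is local, so it suffices to check agreement on local sections that generate $\cF \otimes \gr_G^q\gr_F^p\cV$ as an $\cO_X$-module.

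First I would fix a local section $v$ of $\cV$ lying in the intersection $F^p\cV \cap G^q\cV$, together with a local section $f$ of $\cF$. Then the class of $v$ represents an element of $\gr_G^q\gr_F^p\cV$, and elements of the form $f \otimes [v]$ generate $\cF \otimes \gr_G^q\gr_F^p\cV$ locally. It therefore suffices to show that the two composite morphisms carry $f \otimes [v]$ to the same class in $\gr_F^p\gr_G^q(\cF \otimes \cV)$.

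Next I would unwind the definitions of the arrows appearing in the diagram. By the definition \eqref{eq:1} of the filtration on a tensor product as an image, each canonical morphism from \eqref{eq:18} is the one that sends a representative $f' \otimes w$ to the class $[f' \otimes w]$ whenever $w$ lies in the appropriate filtration step. Likewise, the canonical map exchanging the order of two gradings sends $[[v]_F]_G$ to $[[v]_G]_F$ for $v \in F^p \cap G^q$. Chasing along the top row, $f \otimes [[v]_F]_G$ is sent to $[f \otimes [v]_F]_G$ in $\gr_G^q(\cF \otimes \gr_F^p\cV)$, then to $[[f \otimes v]_F]_G$ in $\gr_G^q\gr_F^p(\cF \otimes \cV)$, and finally via the right vertical arrow to $[[f \otimes v]_G]_F$. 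Chasing along the bottom row, the left vertical gives $f \otimes [[v]_G]_F$, which maps to $[f \otimes [v]_G]_F$ and then to $[[f \otimes v]_G]_F$. The two results coincide.

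The main obstacle is purely bookkeeping: at each node of the diagram one must verify that the representative $f \otimes v$ indeed lies in the relevant filtration step of $\cF \otimes \cV$, or of the appropriate graded quotient, so that the class is well defined. This follows from \eqref{eq:1}, since $v \in F^p\cV \cap G^q\cV$ forces $f \otimes v \in F^p(\cF \otimes \cV) \cap G^q(\cF \otimes \cV)$, and analogously after passing to any intermediate graded piece. Beyond this, one only needs to keep straight which of $F$ and $G$ plays the role of the \emph{outer} versus \emph{inner} filtration at each stage, which is unambiguous once a representative $v$ has been fixed.
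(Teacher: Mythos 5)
Your proof is correct and follows essentially the same route as the paper, which simply records this as ``Easy by definition'': all six arrows send a representative to its class, so chasing the generator $f\otimes[v]$ with $v\in F^p\cV\cap G^q\cV$ along both paths lands on $[[f\otimes v]_G]_F$. Your explicit verification that such generators suffice and that $f\otimes v$ stays in the right filtration steps is exactly the bookkeeping the paper leaves to the reader.
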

\begin{proof}
Easy by definition.
\end{proof}

\begin{lem}
\label{lem:10}
Assume that
$\gr_F^p\gr_G^q\cV \simeq \gr_G^q\gr_F^p\cV$
is a locally free $\cO_X$-module of finite rank
for all $p,q$.
Then the morphism
\eqref{eq:10}
is an isomorphism for all $p$,
under which the filtrations $G$ on the both sides
are identified.
Similarly,
the morphism \eqref{eq:17}
is an isomorphism for all $q$,
under which the filtrations $F$ on the both sides
are identified.
Moreover, all the morphisms
in \eqref{eq:18}
are isomorphisms for all $p,q$.
\end{lem}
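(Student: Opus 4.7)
The overall plan is to reduce everything to a local-freeness assertion about the subsheaves and graded pieces of the two filtrations. Once every piece in sight is locally free of finite rank, tensoring with $\cF$ is exact on all the relevant short exact sequences, and the intersection $F^p\cV\cap G^q\cV$ commutes with $\cF\otimes(-)$, which together are enough to deduce all the claimed isomorphisms and filtration identifications.

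First I would establish, by descending induction on the filtration indices, that $F^p\cV$, $G^q\cV$, $\gr_F^p\cV$, $\gr_G^q\cV$, $G^q\gr_F^p\cV$, and $F^p\cV\cap G^q\cV$ are all locally free of finite rank for every $p,q$. The only input is the hypothesis that $\gr_F^p\gr_G^q\cV\simeq\gr_G^q\gr_F^p\cV$ is locally free of finite rank, together with the elementary fact that an extension of two locally free sheaves of finite rank is locally free of finite rank; the finiteness of the filtrations keeps the induction terminating. For the intersection one uses the short exact sequence $0\to F^{p+1}\cV\cap G^q\cV \to F^p\cV\cap G^q\cV \to G^q\gr_F^p\cV\to 0$ (surjectivity onto $G^q\gr_F^p\cV$ is built into the definition of the induced filtration) combined with descending induction on $p$.

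Once local freeness is in hand, the isomorphism \eqref{eq:10} is immediate from the remark made just after \eqref{eq:10}, and similarly for \eqref{eq:17}. To identify the two filtrations $G$ on the source and target of \eqref{eq:10}, I would describe each of them explicitly as the image of $\cF\otimes(F^p\cV\cap G^q\cV)+\cF\otimes F^{p+1}\cV$ in $\cF\otimes F^p\cV$, taken modulo $\cF\otimes F^{p+1}\cV$. The essential input is the identity $(\cF\otimes F^p\cV)\cap(\cF\otimes G^q\cV)=\cF\otimes(F^p\cV\cap G^q\cV)$ inside $\cF\otimes\cV$, which follows by realising $F^p\cV\cap G^q\cV$ as the kernel of $\cV\to(\cV/F^p\cV)\oplus(\cV/G^q\cV)$ and observing that local freeness of the quotients makes this kernel commute with $\cF\otimes(-)$. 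The assertion for \eqref{eq:17} is symmetric.

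Finally, the first and second morphisms in \eqref{eq:18} are nothing but \eqref{eq:10} and \eqref{eq:17} applied to the locally free filtered sheaves $\gr_G^q\cV$ and $\gr_F^p\cV$, so they are isomorphisms by the same remark after \eqref{eq:10}. The third and fourth morphisms are obtained by applying $\gr_F^p$ or $\gr_G^q$ to \eqref{eq:17} or \eqref{eq:10}; these are isomorphisms because the previous step showed that the filtered versions of \eqref{eq:17} and \eqref{eq:10} identify the induced filtrations $F$ (resp.\ $G$) on both sides. The principal technical burden I anticipate is the filtration-identification step: conceptually routine but requiring careful bookkeeping with nested intersections and sums of the various locally free subsheaves, for which the local-freeness chain from the first step is indispensable.
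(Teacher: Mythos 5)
Your proposal is correct, but it follows a genuinely different route from the paper. The paper's proof is a citation to Lemma 2.7 of the reference \emph{Limits of Hodge structures in several variables, II}, and the method there (as the paper itself reveals in its proof of Lemma \ref{lem:6}) is to choose, locally, a bigraded splitting $\cV=\bigoplus_{p,q}\cV^{p,q}$ with $F^p\cV=\bigoplus_{p'\ge p}\cV^{p',q}$ and $G^q\cV=\bigoplus_{q'\ge q}\cV^{p,q'}$; once such a decomposition compatible with both filtrations exists, every assertion of the lemma becomes trivial, since all the graded and bigraded objects are identified with direct sums of the $\cV^{p,q}$ and tensoring with $\cF$ commutes with direct sums. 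You instead avoid constructing the splitting and run an inductive local-freeness argument ($F^p\cV$, $G^q\cV$, $F^p\cV\cap G^q\cV$, and the various quotients are all locally free because their graded pieces are), then deduce that $\cF\otimes(-)$ commutes with the relevant subobjects and intersections. The trade-off: the paper's splitting argument is shorter once the compatible bigrading is known to exist, but that existence is itself a nontrivial induction hidden in the citation; your argument is more elementary and self-contained at the cost of more bookkeeping. One point in your write-up deserves a little more care: for the identity $(\cF\otimes F^p\cV)\cap(\cF\otimes G^q\cV)=\cF\otimes(F^p\cV\cap G^q\cV)$, local freeness of $\cV/F^p\cV$ and $\cV/G^q\cV$ alone is not quite the right justification — you should observe that $\cV/(F^p\cV\cap G^q\cV)$ (and, for full left-exactness of the comparison with the direct sum, also $\cV/(F^p\cV+G^q\cV)$) is locally free, which follows from your same induction since its graded pieces are among the $F^p\gr_G^{q'}\cV$ and $\gr_F^{p'}\cV/G^q\gr_F^{p'}\cV$; local freeness of the cokernel is what makes the short exact sequence locally split and hence stable under $\cF\otimes(-)$. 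With that supplement the argument is complete.
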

\begin{proof}
See \cite[Lemma 2.7]{FujisawaLHSSV2}.
\end{proof}

\begin{para}
\label{para:6}
Let $\cV$ be a coherent $\cO_X$-module.
The $\cO_X$-dual of $\cV$ is denoted by $\cV^{\ast}$,
that is,
$\cV^{\ast}=\shom_{\cO_X}(\cV, \cO_X)$.
If a finite decreasing filtration $F$ on $\cV$ is given,
a finite decreasing filtration $F$ on $\cV^{\ast}$ is defined by
\begin{equation}
\label{eq:25}
F^p\cV^{\ast}
=\{f \in \cV^{\ast}
\mid
f(F^{-p+1}\cV)=0\}
\simeq
(\cV/F^{-p+1}\cV)^{\ast}
\end{equation}
for every $p$.
Then a section $f \in F^p\cV^{\ast}$
defines a morphism from $\gr_F^{-p}\cV$ to $\cO_X$.
Thus we obtain the canonical morphism
\begin{equation}
F^p\cV^{\ast}
\longrightarrow
(\gr_F^{-p}\cV)^{\ast},
\end{equation}
which induces the canonical morphism
\begin{equation}
\label{eq:22}
\gr_F^p\cV^{\ast}
\longrightarrow
(\gr_F^{-p}\cV)^{\ast}
\end{equation}
for all $p$.
Let $G$ be another finite decreasing filtration on $\cV$.
Then we have the canonical morphism
\begin{equation}
\label{eq:21}
\gr_G^q\cV^{\ast}
\longrightarrow
(\gr_G^{-q}\cV)^{\ast}
\end{equation}
for all $q$.
Because we have
\begin{equation}
F^p\cV^{\ast} \cap G^q\cV^{\ast}
=\{f \in \cV^{\ast}
\mid
f(F^{-p+1}\cV)=0 \text{ and } f(G^{-q+1}\cV)=0\},
\end{equation}
the morphism \eqref{eq:21}
sends $F^p\gr_G^q\cV^{\ast}$
to $F^p(\gr_G^{-q}\cV)^{\ast}$
for all $p$.
\end{para}

\begin{lem}
\label{lem:6}
If $\gr_F^p\cV$ is a locally free $\cO_X$-module of finite rank for all $p$,
then the morphism \eqref{eq:22}
is an isomorphism for all $p$.
If $\gr_F^p\gr_G^q\cV$ is
a locally free $\cO_X$-module of finite rank for all $p,q$,
then the filtrations $F$ on $\gr_G^q\cV^{\ast}$
and on $(\gr_G^{-q}\cV)^{\ast}$ coincide
under the identification \eqref{eq:21}.
In particular, we have the canonical isomorphisms
\begin{equation}
\gr_F^p\gr_G^q\cV^{\ast}
\overset{\simeq}{\longrightarrow}
\gr_F^p(\gr_G^{-q}\cV)^{\ast}
\overset{\simeq}{\longrightarrow}
(\gr_F^{-p}\gr_G^{-q}\cV)^{\ast}
\end{equation}
for all $p,q$.
\end{lem}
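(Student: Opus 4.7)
The plan is to establish the three assertions in order, reducing each dualization to a locally split short exact sequence that the local-freeness hypotheses make available.

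For the first assertion I would first observe, by induction on the length of $F$, that $\cV/F^r\cV$ is locally free of finite rank for every $r$, since an extension of locally free sheaves of finite rank is locally free. Consequently the short exact sequence
\begin{equation}
0 \longrightarrow \gr_F^{-p}\cV \longrightarrow \cV/F^{-p+1}\cV \longrightarrow \cV/F^{-p}\cV \longrightarrow 0
\end{equation}
splits locally, so $\shom_{\cO_X}(-,\cO_X)$ preserves exactness on it. Using the identifications from \eqref{eq:25}, namely $(\cV/F^{-p+1}\cV)^{\ast}=F^p\cV^{\ast}$ and $(\cV/F^{-p}\cV)^{\ast}=F^{p+1}\cV^{\ast}$, the dualized sequence becomes
\begin{equation}
0 \longrightarrow F^{p+1}\cV^{\ast} \longrightarrow F^p\cV^{\ast} \longrightarrow (\gr_F^{-p}\cV)^{\ast} \longrightarrow 0,
\end{equation}
and unwinding the definitions shows that the induced quotient map is exactly \eqref{eq:22}.

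For the second assertion, I first note that the hypothesis implies each $\gr_F^p\cV$ and $\gr_G^q\cV$ is itself locally free of finite rank: each carries a finite induced filtration whose successive quotients are the bigraded pieces, locally free by hypothesis (after invoking Lemma \ref{lem:10} to identify the two orderings of the graded operations). Hence the first assertion applies to both filtrations on $\cV$. The inclusion sending $F^p\gr_G^q\cV^{\ast}$ into $F^p(\gr_G^{-q}\cV)^{\ast}$ was already recorded in \ref{para:6}. For the reverse inclusion I would argue locally: by iteratively splitting off locally free bigraded pieces, one constructs a local isomorphism $\cV\simeq\bigoplus_{p,q}\gr_F^p\gr_G^q\cV$ compatible with both $F$ and $G$ simultaneously. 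Dualizing this bi-splitting, both filtrations $F$ on $\gr_G^q\cV^{\ast}$ and on $(\gr_G^{-q}\cV)^{\ast}$ read off as the same direct sum of pieces $(\gr_F^{p'}\gr_G^{-q}\cV)^{\ast}$ with $p'\leq -p$, giving the desired equality.

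The ``in particular'' chain then follows immediately. Applying the first assertion to the coherent sheaf $\gr_G^{-q}\cV$ equipped with its induced filtration $F$ (whose graded pieces $\gr_F^{p}\gr_G^{-q}\cV$ are locally free by hypothesis and Lemma \ref{lem:10}) yields the second isomorphism $\gr_F^p(\gr_G^{-q}\cV)^{\ast}\simeq(\gr_F^{-p}\gr_G^{-q}\cV)^{\ast}$, while passing to $\gr_F^p$ in the isomorphism \eqref{eq:21}, known by the second assertion to preserve the filtration $F$ strictly, yields the first. The main obstacle is the reverse inclusion in the second assertion: one must produce, from a section $g\in F^p(\gr_G^{-q}\cV)^{\ast}$, a lift $f\in\cV^{\ast}$ vanishing on \emph{both} $F^{-p+1}\cV$ and $G^{-q+1}\cV$ simultaneously. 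Local freeness of $\gr_F^p\cV$ alone is insufficient for this; one genuinely needs a local bi-splitting of $F$ and $G$, and it is here that the bigraded local-freeness hypothesis is used essentially.
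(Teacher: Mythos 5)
Your proposal is correct and takes essentially the same route as the paper: the second assertion is reduced to a local direct sum decomposition $\cV=\bigoplus_{p,q}\cV^{p,q}$ compatible with both $F$ and $G$, which is precisely the bifiltered splitting the paper imports from Lemma 2.7 of \cite{FujisawaLHSSV2}, and you correctly identify that the bigraded local-freeness is what makes this splitting (and hence the reverse inclusion) available. Your dualization of the locally split sequence $0\to\gr_F^{-p}\cV\to\cV/F^{-p+1}\cV\to\cV/F^{-p}\cV\to 0$ is a sound way to supply the first assertion, which the paper leaves to the reader.
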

\begin{proof}
Since we can easily prove the first assertions,
we prove the second assertion here.
By the assumption that $\gr_F^p\gr_G^q\cV$ is locally free of finite rank
for all $p,q$,
we may assume that there exists a direct sum decomposition
\begin{equation}
\cV=\bigoplus_{p,q}\cV^{p,q}
\end{equation}
satisfying the properties
\begin{equation}
F^p\cV=\bigoplus_{p' \ge p}\cV^{p',q}, \quad
G^q\cV=\bigoplus_{q' \ge q}\cV^{p,q'}
\end{equation}
as in the proof of Lemma 2.7 of \cite{FujisawaLHSSV2}.
Then the conclusion is trivial.
\end{proof}

Next, we define the notion of a refinement of a filtration
and prove several elementary properties of it.

\begin{defn}
Let $\cA$ be an abelian category,
$V$ an object of $\cA$
and $W$ a finite increasing filtration on $V$.
A refinement of $W$ is a pair $(M, \varphi)$
consisting of a finite increasing filtration $M$
and a strictly increasing map
$\varphi: \bZ \longrightarrow \bZ$
satisfying
$W_mV=M_{\varphi(m)}V$ for all $m$.
Sometimes we say $M$ is a refinement of $W$
if it is not necessary to specify the map $\varphi$.
\end{defn}

\begin{defn}
\label{defn:1}
Let $\varphi: \bZ \longrightarrow \bZ$ be
a strictly increasing map.
Then for any $k \in \bZ$,
there exists the unique $m \in \bZ$
such that $\varphi(m-1) < k \le \varphi(m)$.
This integer $m$ is denoted by $m_{\varphi}(k)$,
or simply $m(k)$ if there is no danger of confusion.
By definition
$\varphi(m(k)-1) < k \le \varphi(m(k))$
for every $k \in \bZ$.
\end{defn}

\begin{lem}
\label{lem:4}
Let $V$ and $W$ be as above
and $(M, \varphi)$ a refinement of $W$.
Then we have
\begin{equation}
W_{m(k)-1}V=M_{\varphi(m(k)-1)}V
\subset
M_{k-1}V
\subset
M_kV
\subset
M_{\varphi(m(k))}V=W_{m(k)}V
\end{equation}
for all $k \in \bZ$.
Therefore we have the canonical surjection
\begin{equation}
M_kV \longrightarrow M_k\gr_{m(k)}^WV
\end{equation}
which induces an isomorphism
\begin{equation}
\label{eq:11}
\gr_k^MV
\overset{\simeq}{\longrightarrow}
\gr_k^M\gr_{m(k)}^WV
\end{equation}
for all $k \in \bZ$.
\end{lem}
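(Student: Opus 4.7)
The plan is to proceed by unpacking the definitions in three short steps; no substantive argument beyond this is needed.

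First, I would establish the chain of inclusions. The two outer equalities are simply the definition of a refinement, so the task reduces to verifying the inner inclusions $M_{\varphi(m(k)-1)}V \subset M_{k-1}V$ and $M_kV \subset M_{\varphi(m(k))}V$. By Definition~\ref{defn:1} we have $\varphi(m(k)-1) < k \le \varphi(m(k))$, and since $\varphi$ takes integer values the strict inequality is equivalent to $\varphi(m(k)-1) \le k-1$. Both inclusions are then immediate from the monotonicity of $M$.

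Second, I would obtain the surjection. From the chain in Step~1 we have $W_{m(k)-1}V \subset M_kV \subset W_{m(k)}V$, so the composition of the inclusion $M_kV \hookrightarrow W_{m(k)}V$ with the projection $W_{m(k)}V \twoheadrightarrow \gr_{m(k)}^WV$ has kernel $W_{m(k)-1}V$. Its image is exactly $M_k\gr_{m(k)}^WV$ by the definition of the induced filtration on the graded piece, which is the desired surjection.

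Finally, for the isomorphism~\eqref{eq:11}, I would apply the identification from Step~2 both at level $k$ and at level $k-1$, giving $M_k\gr_{m(k)}^WV \cong M_kV/W_{m(k)-1}V$ and $M_{k-1}\gr_{m(k)}^WV \cong M_{k-1}V/W_{m(k)-1}V$. The second identification relies on the containment $W_{m(k)-1}V \subset M_{k-1}V$ from Step~1; this is the only point where one must be careful to keep $m(k)$ distinct from $m(k-1)$, since the definition of $M_{k-1}\gr_{m(k)}^WV$ a priori only provides $W_{m(k-1)}V$ in the numerator. Taking the quotient of the first identification by the second and applying the third isomorphism theorem yields $\gr_k^M V$, giving the claimed isomorphism. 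There is no real obstacle here; everything follows formally from the chain of containments established in Step~1.
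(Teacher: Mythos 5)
Your proof is correct; the paper states Lemma \ref{lem:4} without any proof, treating it as immediate from Definition \ref{defn:1} and the definition of a refinement, and your three steps are precisely the definitional unwinding being left implicit (including the one genuinely delicate point, namely that $W_{m(k)-1}V\subset M_{k-1}V$ is what makes the induced subobject $M_{k-1}\gr_{m(k)}^WV$ equal to $M_{k-1}V/W_{m(k)-1}V$). The only cosmetic remark is that since $V$ lives in an abelian category, the element-wise language and the third isomorphism theorem should be read as statements about subobjects and quotients, which is standard and causes no difficulty.
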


\begin{lem}
\label{lem:2}
Let $V$ and $W$ be as above,
$M$ a refinement of $W$
and $F$ a finite decreasing filtration on $V$.
Under the isomorphism
\eqref{eq:11},
\begin{equation}
F^p\gr_k^MV
\simeq
F^p\gr_k^M\gr_{m(k)}^WV
\end{equation}
for all $k,p$.
\end{lem}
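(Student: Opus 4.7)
The plan is to unwind both sides of the isomorphism \eqref{eq:11} and verify directly that the induced subobjects indexed by $F^p$ coincide. The chain of inclusions
\begin{equation}
W_{m(k)-1}V \subset M_{k-1}V \subset M_kV \subset W_{m(k)}V
\end{equation}
established in Lemma \ref{lem:4} will do most of the work.

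First I would write out the left-hand side in the standard way, namely
\begin{equation}
F^p\gr_k^MV = (F^pV \cap M_kV + M_{k-1}V)/M_{k-1}V,
\end{equation}
and then carefully unfold the right-hand side, which is obtained by a three-step procedure: form $\gr_{m(k)}^WV = W_{m(k)}V/W_{m(k)-1}V$, induce $M$ on this quotient, take $\gr_k^M$, and finally apply $F^p$. Using the chain above, the induced $M$ simplifies to $M_k\gr_{m(k)}^WV = M_kV/W_{m(k)-1}V$ and $M_{k-1}\gr_{m(k)}^WV = M_{k-1}V/W_{m(k)-1}V$, which recovers the isomorphism \eqref{eq:11} in concrete form.

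Next I would propagate $F^p$ through this description. The induced filtration on $\gr_{m(k)}^WV$ is
\begin{equation}
F^p\gr_{m(k)}^WV = (F^pV \cap W_{m(k)}V + W_{m(k)-1}V)/W_{m(k)-1}V,
\end{equation}
and intersecting with $M_k\gr_{m(k)}^WV$ gives $(F^pV \cap M_kV + W_{m(k)-1}V)/W_{m(k)-1}V$. The key observation here is that if $v_1 \in F^pV \cap W_{m(k)}V$ differs from some $v_2 \in M_kV$ by an element of $W_{m(k)-1}V \subset M_{k-1}V$, then $v_1$ already lies in $M_kV$, so nothing is lost by replacing $W_{m(k)}V$ with $M_kV$ inside the intersection. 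Quotienting by the analogous expression for $M_{k-1}\gr_{m(k)}^WV$ then gives exactly $(F^pV \cap M_kV + M_{k-1}V)/M_{k-1}V$, matching the left-hand side.

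The argument is mostly bookkeeping, valid in any abelian category without local-freeness hypotheses; the main thing to watch is that the induced filtrations are computed consistently and that the identification \eqref{eq:11} is used compatibly with $F$. The one nontrivial step is the simplification of the intersection $F^p\gr_{m(k)}^WV \cap M_k\gr_{m(k)}^WV$ described above, and this follows directly from the inclusion $W_{m(k)-1}V \subset M_{k-1}V$.
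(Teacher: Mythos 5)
Your proof is correct and takes essentially the same route as the paper's: the crucial point in both is that an element of $F^p\gr_{m(k)}^WV\cap M_k\gr_{m(k)}^WV$ is already represented by an element of $F^pV\cap M_kV$, which rests on the inclusion $W_{m(k)-1}V\subset M_{k-1}V$ from Lemma \ref{lem:4}. The paper phrases this as surjectivity of the map $F^pV\cap M_kV\to F^p\gr_k^M\gr_{m(k)}^WV$ rather than via your explicit subquotient descriptions, but the content is identical.
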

\begin{proof}
We have the canonical surjection
\begin{equation}
\label{eq:12}
F^p\gr_{m(k)}^WV \cap M_k\gr_{m(k)}^WV
\longrightarrow
F^p\gr_k^M\gr_{m(k)}^WV
\end{equation}
by definition.
If $x \in F^pV \cap W_{m(k)}V$
and $y \in M_kV \cap W_{m(k)}V=M_kV$
define the same element in $\gr_{m(k)}^WV$,
then $x-y=z \in W_{m(k)-1}V \subset M_{k-1}V$.
Hence $x=y+z \in F^pV \cap M_kV$.
Thus the canonical morphism
\begin{equation}
\label{eq:13}
F^pV \cap M_kV
\longrightarrow
F^p\gr_{m(k)}^WV \cap M_k\gr_{m(k)}^WV
\end{equation}
is surjective.
Combining the surjectivity of the morphism \eqref{eq:12},
the canonical morphism
\begin{equation}
F^pV \cap M_kV
\longrightarrow
F^p\gr_k^M\gr_{m(k)}^WV
\end{equation}
is surjective.
Thus we obtain the conclusion.
\end{proof}

\begin{cor}
In the situation above,
we have the canonical isomorphisms
\begin{equation}
\label{eq:15}
\gr_F^p\gr_k^MV
\overset{\simeq}{\longrightarrow}
\gr_F^p\gr_k^M\gr_{m(k)}^WV
\end{equation}
for all $k,p$.
\end{cor}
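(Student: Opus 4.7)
The plan is to read the corollary as a direct consequence of Lemma \ref{lem:2}. The isomorphism \eqref{eq:11} from Lemma \ref{lem:4} identifies $\gr_k^M V$ with $\gr_k^M \gr_{m(k)}^W V$, and Lemma \ref{lem:2} asserts that for every $p$ this identification carries the subobject $F^p \gr_k^M V$ onto $F^p \gr_k^M \gr_{m(k)}^W V$. Thus the filtrations $F$ on the two sides of \eqref{eq:11} coincide.

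Given this, the proof reduces to the observation that $\gr_F^p$ is a functor of filtered objects, so applying it to the filtered isomorphism furnished by Lemmas \ref{lem:4} and \ref{lem:2} yields \eqref{eq:15}. Concretely, I would write the isomorphism of Lemma \ref{lem:2} for both $p$ and $p+1$, fit them into the exact sequence
\begin{equation}
0 \longrightarrow F^{p+1}\gr_k^M V \longrightarrow F^p\gr_k^M V \longrightarrow \gr_F^p\gr_k^M V \longrightarrow 0
\end{equation}
and compare with the analogous sequence on $\gr_k^M\gr_{m(k)}^W V$, then invoke the five lemma (or just take cokernels).

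There is essentially no obstacle: everything is formal once Lemma \ref{lem:2} is in hand. The only point to be careful about is that the isomorphism \eqref{eq:11} is the one induced on the subquotient level, and one must verify that the canonical morphism $\gr_F^p\gr_k^M V \to \gr_F^p\gr_k^M\gr_{m(k)}^W V$ under consideration is indeed the one obtained from \eqref{eq:11} by passing to $\gr_F^p$; this is immediate from the constructions of the canonical morphisms in the earlier paragraphs but should be stated for the record.
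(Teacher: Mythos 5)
Your proposal is correct and matches the paper's (implicit) argument: the corollary is stated without proof precisely because, once Lemma \ref{lem:2} identifies $F^p$ on both sides of the isomorphism \eqref{eq:11} for every $p$, passing to the quotients $F^p/F^{p+1}$ immediately yields \eqref{eq:15}. Your extra remark that the resulting map is the canonical one induced by \eqref{eq:11} is a reasonable point of care but does not change the substance.
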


\begin{lem}
\label{lem:3}
We have
\begin{equation}
M_k\gr_F^p\gr_{m(k)}^WV
\simeq
M_k\gr_{m(k)}^W\gr_F^pV
\end{equation}
under the canonical identification
\begin{equation}
\label{eq:14}
\gr_F^p\gr_{m(k)}^WV
\simeq
\gr_{m(k)}^W\gr_F^pV
\end{equation}
for all $k,p$.
\end{lem}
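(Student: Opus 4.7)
The plan is to make the canonical identification \eqref{eq:14} explicit as a pair of surjections sharing a common kernel, and then show that both $M_k\gr_F^p\gr_{m(k)}^WV$ and $M_k\gr_{m(k)}^W\gr_F^pV$ arise as the image of the single subobject $F^pV \cap M_kV \subset V$ under the respective surjection.

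First, I would describe \eqref{eq:14} concretely. A short application of the modular law shows that the two natural maps
\[
F^pV \cap W_{m(k)}V \twoheadrightarrow \gr_F^p\gr_{m(k)}^WV, \qquad F^pV \cap W_{m(k)}V \twoheadrightarrow \gr_{m(k)}^W\gr_F^pV
\]
are both surjective with the same kernel $F^pV \cap W_{m(k)-1}V + F^{p+1}V \cap W_{m(k)}V$, and \eqref{eq:14} is precisely the resulting identification of these two quotients.

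Next, I would verify that the induced filtration $M_k$ on each side coincides with the image of $F^pV \cap M_kV$ under the corresponding surjection. For the left-hand side, the sandwich $W_{m(k)-1}V \subset M_kV \subset W_{m(k)}V$ from Lemma \ref{lem:4} yields $M_k\gr_{m(k)}^WV = M_kV/W_{m(k)-1}V$, and the surjectivity argument used in the proof of Lemma \ref{lem:2} then reduces $M_k\gr_F^p\gr_{m(k)}^WV$ to the image of $F^pV \cap M_kV$. For the right-hand side, the inclusion $M_kV \subset W_{m(k)}V$ forces $M_k\gr_F^pV \subset W_{m(k)}\gr_F^pV$, so $M_k\gr_{m(k)}^W\gr_F^pV$ is simply the image of $M_k\gr_F^pV$ in $\gr_{m(k)}^W\gr_F^pV$, which is again the image of $F^pV \cap M_kV$.

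Combining the two steps, both sides equal the image of $F^pV \cap M_kV$ under the two surjections composing \eqref{eq:14}, hence they correspond under \eqref{eq:14}. The main technical point, which is not really an obstacle, is the refinement sandwich of Lemma \ref{lem:4}: this is what makes $M_k\gr_{m(k)}^WV$ equal to $M_kV/W_{m(k)-1}V$ and collapses all the bookkeeping onto the single subobject $F^pV \cap M_kV$.
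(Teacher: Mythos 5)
Your proposal is correct and follows essentially the same route as the paper's own proof: both arguments identify each of $M_k\gr_F^p\gr_{m(k)}^WV$ and $M_k\gr_{m(k)}^W\gr_F^pV$ as the image of the single subobject $F^pV \cap M_kV$ under the two canonical surjections realizing \eqref{eq:14}, using the inclusion $M_kV \subset W_{m(k)}V$ for one side and the surjectivity of \eqref{eq:13} from the proof of Lemma \ref{lem:2} for the other. The only difference is that you spell out the identification \eqref{eq:14} explicitly as a pair of surjections with common kernel, which the paper leaves implicit.
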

\begin{proof}
On $\gr_F^pV$,
we have $M_k\gr_F^pV \subset W_{m(k)}\gr_F^pV$ by definition.
Therefore the canonical morphism
\begin{equation}
M_kV \cap F^pV
\longrightarrow
M_k\gr_{m(k)}^W\gr_F^pV
\end{equation}
is surjective.
On the other hand,
the canonical morphism
\begin{equation}
M_k\gr_{m(k)}^WV \cap F^p\gr_{m(k)}^WV
\longrightarrow
M_k\gr_F^p\gr_{m(k)}^WV
\end{equation}
is surjective by definition.
Combining the surjectivity of the morphism \eqref{eq:13},
the canonical morphism
\begin{equation}
M_kV \cap F^pV
\longrightarrow
M_k\gr_F^p\gr_{m(k)}^WV
\end{equation}
is surjective.
Thus we obtain the conclusion.
\end{proof}

\begin{cor}
The canonical isomorphism
\eqref{eq:14}
induces the isomorphism
\begin{equation}
\label{eq:16}
\gr_k^M\gr_F^p\gr_{m(k)}^WV
\simeq
\gr_k^M\gr_{m(k)}^W\gr_F^pV
\end{equation}
for all $m,p$.
\end{cor}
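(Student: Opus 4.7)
The plan is to derive \eqref{eq:16} from Lemma \ref{lem:3} by additionally verifying that the canonical identification \eqref{eq:14} also matches the respective $M_{k-1}$-parts on the two sides, and then passing to the quotient $M_k/M_{k-1}$. Lemma \ref{lem:3} already provides the matching of $M_k$-parts, so once the analogous matching for $M_{k-1}$ is in hand, the induced map on the quotient $M_k/M_{k-1}$ is precisely the desired \eqref{eq:16}.

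For the $M_{k-1}$-matching I would proceed by a case split according to Definition \ref{defn:1}: from $\varphi(m(k)-1) < k \le \varphi(m(k))$ one deduces $\varphi(m(k)-1) \le k-1 \le \varphi(m(k))$, which forces either $m(k-1) = m(k)$ or $m(k-1) = m(k)-1$. In the first case, applying Lemma \ref{lem:3} with $k$ replaced by $k-1$ yields $M_{k-1}\gr_F^p\gr_{m(k)}^WV \simeq M_{k-1}\gr_{m(k)}^W\gr_F^pV$ under \eqref{eq:14}, which is exactly what is needed. In the second case, we have $k-1 = \varphi(m(k)-1)$, hence $M_{k-1}V = W_{m(k)-1}V$; consequently $M_{k-1}V \cap F^pV$ maps to zero in $\gr_{m(k)}^WV$, and its image in $\gr_F^pV$ lies in $W_{m(k)-1}\gr_F^pV$ and so maps to zero in $\gr_{m(k)}^W\gr_F^pV$, so both $M_{k-1}$-parts vanish and the matching is trivial.

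The main obstacle is precisely the second case, where Lemma \ref{lem:3} cannot be applied with the index $k-1$ since the subscripts $m(k-1)$ and $m(k)$ no longer coincide. This is resolved by the observation that $M_{k-1}V$ has already been absorbed into $W_{m(k)-1}V$, causing both $M_{k-1}$-parts to collapse to zero. Combining the two cases with Lemma \ref{lem:3} and passing to the quotient $M_k/M_{k-1}$ then delivers the required isomorphism \eqref{eq:16}.
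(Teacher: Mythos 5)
Your proof is correct and follows the route the paper intends: the corollary is stated without an explicit proof as an immediate consequence of Lemma \ref{lem:3}, and your argument simply makes explicit the passage from the matching of the $M_k$-parts under \eqref{eq:14} to the graded quotient $\gr_k^M=M_k/M_{k-1}$. The case analysis at level $k-1$ (either $m(k-1)=m(k)$, where Lemma \ref{lem:3} applies again, or $k-1=\varphi(m(k)-1)$, where $M_{k-1}V=W_{m(k)-1}V$ forces both $M_{k-1}$-parts to vanish) is exactly the detail the paper leaves implicit, and you handle it correctly.
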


\begin{cor}
\label{cor:1}
In the situation above,
the diagram
\begin{equation}
\xymatrix{
\gr_F^p\gr_k^MV \ar[r]_-{(1)}^-{\simeq} \ar[dd]_-{\simeq}^-{(2)}
& \gr_F^p\gr_k^M\gr_{m(k)}^WV \ar[d]_-{(3)}^-{\simeq} \\
& \gr_k^M\gr_F^p\gr_{m(k)}^WV \ar[d]^-{\simeq}_-{(4)} \\
\gr_k^M\gr_F^pV \ar[r]^-{\simeq}_-{(5)} & \gr_k^M\gr_{m(k)}^W\gr_F^pV
}
\end{equation}
is commutative for all $k,p$,
where $(1)$ is the isomorphism \eqref{eq:15},
$(2)$ and $(3)$ are the canonical isomorphisms
switching two filtrations $F, M$
on $V$ and $\gr_{m(k)}^WV$ respectively,
$(4)$ is the isomorphism \eqref{eq:16}
and $(5)$ is the isomorphism \eqref{eq:11}
for $\gr_F^pV$.
\end{cor}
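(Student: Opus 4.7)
The plan is to argue that all five of the isomorphisms $(1)$--$(5)$ in the diagram are induced, at the level of representatives, by the identity map on $F^pV \cap M_kV$, so that each of the two compositions sends the class $[x]$ of $x \in F^pV \cap M_kV$ to the class $[x]$ in the bottom-right corner.

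First I would set up the common representative picture. For each of the five corners, the canonical morphism
\begin{equation}
F^pV \cap M_kV \longrightarrow (\text{corner})
\end{equation}
is surjective. For $\gr_F^p\gr_k^MV$ this is by definition. For the three corners involving $\gr_{m(k)}^W$, the key point, already extracted in the proof of Lemma \ref{lem:2} and Lemma \ref{lem:3}, is that $M_kV \subset W_{m(k)}V$ (from Lemma \ref{lem:4}), so that $F^pV \cap M_kV = F^pV \cap M_kV \cap W_{m(k)}V$ surjects onto both $F^p\gr_k^M\gr_{m(k)}^WV$ and $M_k\gr_F^p\gr_{m(k)}^WV$. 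For $\gr_k^M\gr_F^pV$ the surjectivity is again by definition.

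Next I would verify that each individual arrow $(1)$--$(5)$ is compatible with these surjections, sending $[x]\mapsto[x]$. Arrows $(1)$ and $(5)$ are instances of the isomorphism \eqref{eq:11} (respectively its refinement \eqref{eq:15}), which by construction in Lemma \ref{lem:4} is induced by $\mathrm{id}_V$ via the inclusion $M_kV \subset W_{m(k)}V$; arrow $(5)$ is just \eqref{eq:11} applied to $\gr_F^pV$ in place of $V$. Arrows $(2)$ and $(3)$ are the standard swap of two filtrations on a common object, and via the tautology
\begin{equation}
\frac{F^p\cap M_k}{(F^{p+1}\cap M_k)+(F^p\cap M_{k-1})}
=\frac{M_k\cap F^p}{(M_{k-1}\cap F^p)+(M_k\cap F^{p+1})}
\end{equation}
(applied on $V$ and on $\gr_{m(k)}^WV$ respectively) are also induced by the identity on representatives. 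Finally, arrow $(4)$ is the isomorphism \eqref{eq:16}, whose construction in Lemma \ref{lem:3} is likewise identity-induced.

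Granted these compatibilities, the diagram chase is immediate: an element $x \in F^pV \cap M_kV$ traces around both paths to the class of $x$ in $\gr_k^M\gr_{m(k)}^W\gr_F^pV$, and since the surjection $F^pV \cap M_kV \twoheadrightarrow \gr_k^M\gr_{m(k)}^W\gr_F^pV$ factors through the upper-left corner, equality of the two composites follows. I expect no genuine obstacle; the whole content is bookkeeping of representatives, and all the substantive surjectivity statements needed have already been proved in Lemmas \ref{lem:4}--\ref{lem:3}. The only mild care is to ensure that when lifting classes along each $(i)$ one uses a representative that simultaneously lies in $F^pV \cap M_kV$, which is exactly what the surjectivity statements above supply.
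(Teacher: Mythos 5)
Your proposal is correct and follows essentially the same route as the paper: the paper's proof likewise observes that all five isomorphisms $(1)$--$(5)$ are induced by the surjection from $F^pV \cap M_kV$ (as extracted from the proofs of Lemmas \ref{lem:2} and \ref{lem:3}), whence the commutativity is immediate. Your write-up merely makes the representative-chasing explicit.
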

\begin{proof}
By the proof of Lemmas \ref{lem:2} and \ref{lem:3},
we can easily see that
all the isomorphisms $(1)$--$(5)$
are induced by the surjection from $F^pV \cap M_kV$.
Thus the conclusion is trivial.
\end{proof}

\section{The category $\fpvhs(X,D)_{\bR}$}
\label{sec:category-fpvhsx-d_br}

\begin{notn}
\label{notn:1}
Let $(X, D)$ be a log pair,
that is, $X$ a smooth complex variety and $D$ a \snc divisor on $X$.
The irreducible decomposition of $D$ is given by $D=\sum_{i \in I}D_i$.
We set
\begin{equation}
D(J)=\bigcap_{i \in J}D_i, \quad
D_J=\sum_{i \in J}D_i
\end{equation}
for $J \subset I$.
For the case of $J= \emptyset$,
$D(\emptyset)=X$ and $D_{\emptyset}=0$ by definition.
Moreover, we use the notation
\begin{equation}
D(J)^{\ast}=D(J) \setminus D(J) \cap D_{I \setminus J}
\end{equation}
for $J \subset I$.
For $J=\emptyset$, $X^{\ast}=X \setminus D$ by definition.
Then $(D(J), D(J) \cap D_{I \setminus J})$ is a log pair again.
For a log pair $(X,D)$,
\begin{equation}
\omega^p_X=\Omega^p_X(\log D)
\end{equation}
for every $p$
as in \cite{KazuyaKato} for short,
if there is no danger of confusion.
Thus
\begin{equation}
\omega^p_{D(J)}=\Omega^p_{D(J)}(\log D(J) \cap D_{I \setminus J})
\end{equation}
for a log pair $(D(J), D(J) \cap D_{I \setminus J})$
for a subset $J \subset I$.
\end{notn}

\begin{para}
Firstly, we add an explanation
to the presentation
given in \cite[Section 5]{Fujino-Fujisawa}.
In 5.8 of \cite{Fujino-Fujisawa},
the condition ($m$MH) is defined.
However, it was not precise enough
because the real structure was not mentioned.
Here the precise statements are given,
which are sufficient
for the argument in \cite[Section 5]{Fujino-Fujisawa}.
\end{para}

\begin{para}
\label{para:1}
Let $(X, D)$ be a log pair.
We assume that the following data is given:
\begin{itemize}
\item
a locally free $\cO_X$-module of finite rank $\cV$,
\item
an integrable log connection $\nabla$ on $\cV$
with the nilpotent residues,
\item
an $\bR$-local subsystem $\bV$ of $\kernel(\nabla)|_{X^{\ast}}$
such that $\bC \otimes \bV=\kernel(\nabla)|_{X^{\ast}}$.
\end{itemize}
The residue morphism along $D_i$
is denoted by $\res_{D_i}(\nabla)$ for $i \in I$.
The morphism $\res_{D_i}(\nabla)$ is nilpotent by definition.
The restriction of $\res_{D_i}(\nabla)$ to
$\cO_{D(J)} \otimes \cV$ is denoted by
$\res_{D_i}(\nabla)|_{D(J)}$ for $i \in J$.
We set
\begin{equation}
\res_{K,D(J)}(\nabla)=\sum_{i \in K}\res_{D_i}(\nabla)|_{D(J)}
\end{equation}
for $K \subset J$.
The monodromy weight filtration for $\res_{K,D(J)}(\nabla)$
on $\cO_{D(J)} \otimes \cV$
is denoted by $W(K)$
for $K \subset J \subset I$
as in \cite{Fujino-Fujisawa}.
\end{para}

\begin{para}
\label{para:2}
Now we treat the local situation.
Namely, let us assume that $X$ is the polydisc $\Delta^n$
with the coordinates $(t_1, t_2, \dots, t_n)$
and $D=\{t_1t_2 \cdots t_l=0\}$ for some $l$ with $1 \le l \le n$.
Then $I=\sli$.

Let $(\cV, \nabla, \bV)$ be
as in \ref{para:1}.
By the local description in
\cite{KatzDW},
there exist
\begin{itemize}
\item
a finite dimensional $\bR$-vector space $V$,
\item
nilpotent endomorphisms
$N_1, N_2, \dots, N_l$ of $V$
satisfying the property $N_iN_j=N_jN_i$ for all $i,j \in \sli$
and
\item
an isomorphism of $\cO_X$-modules
$\varphi: \cO_X \otimes V \longrightarrow \cV$
\end{itemize}
such that the following properties hold:
\begin{itemize}
\item
The integrable log connection
$\varphi^{\ast}\nabla:
\cO_X \otimes V \longrightarrow \omega^1_X \otimes V$
is given by
\begin{equation}
\label{eq:2}
(\varphi^{\ast}\nabla)(f \otimes v)
=df \otimes v
-(2\pi\sqrt{-1})^{-1}f\sum_{i=1}^{l}\frac{dt_i}{t_i} \otimes N_i(v).
\end{equation}
\item
The $\bR$-local system $\varphi^{-1}\bV$ is the image
of the (multi-valued) $\bR$-morphism
\begin{equation}
\label{eq:5}
\exp((2\pi\sqrt{-1})^{-1}\sum_{i=1}^{l}(\log t_i)N_i):
V \longrightarrow \cO_X \otimes V.
\end{equation}
\end{itemize}
Thus we may assume that
the data $(\cV, \nabla, \bV)$
is given by the following:
\begin{itemize}
\item
$\cV=\cO_X \otimes V$,
\item
the integrable log connection $\nabla$ is given by
the right hand side of
\eqref{eq:2},
\item
the $\bR$-local system $\bV$ is the image of the morphism
given in \eqref{eq:5}.
\end{itemize}
Then we have
\begin{equation}
\res_{D_i}(\nabla)=-(2\pi\sqrt{-1})^{-1}(\id \otimes N_i):
\cO_{D_i} \otimes V
\longrightarrow
\cO_{D_i} \otimes V
\end{equation}
for every $i$.
Moreover
\begin{equation}
\res_{K,D(J)}(\nabla)
=-(2\pi\sqrt{-1})^{-1}(\id \otimes N_K):
\cO_{D(J)} \otimes V
\longrightarrow
\cO_{D(J)} \otimes V
\end{equation}
where $N_K=\sum_{i \in K}N_i$ for $K \subset J \subset I$.
Now the monodromy weight filtration of $N_K$ on $V$
is denoted by $W(K)$ for $K \subset I$.
Then we have
\begin{equation}
W(K)_m(\cO_{D(J)} \otimes \cV)=\cO_{D(J)} \otimes W(K)_mV
\end{equation}
for $K \subset J \subset I$ and for all $m$.
Therefore
\begin{equation}
\gr_m^{W(K)}(\cO_{D(J)} \otimes \cV)
\simeq
\cO_{D(J)} \otimes \gr_m^{W(K)}V
\end{equation}
is a free $\cO_{D(J)}$-module
of finite rank for every $m$.
\end{para}

\begin{para}
In the situation above,
let $J \subset I=\sli$ be a subset.
On the free $\cO_{D(J)}$-module
$\cO_{D(J)} \otimes \cV \simeq \cO_{D(J)} \otimes V$,
an integrable log connection $\nabla(J)$ is defined by
\begin{equation}
\label{eq:7}
\nabla(J)(f \otimes v)
=df \otimes v
-(2\pi\sqrt{-1})^{-1}f\sum_{i \in I \setminus J}
\frac{dt_i}{t_i} \otimes N_i(v)
\end{equation}
as in \eqref{eq:2},
and an $\bR$-local subsystem $\bV(J)$
of $\kernel(\nabla(J))|_{D(J)^{\ast}}$
is defined as the image of the morphism
\begin{equation}
\label{eq:8}
\exp((2\pi\sqrt{-1})^{-1}\sum_{i \in I \setminus J}(\log t_i)N_i):
V \longrightarrow \cO_{D(J)} \otimes V,
\end{equation}
as in \eqref{eq:5}.
Then we have
\begin{equation}
\bC \otimes \bV(J)
=\kernel(\nabla(J))|_{D(J)^{\ast}}
\end{equation}
as before.
For any $K \subset J$,
a finite increasing filtration $W(K)$ on $\bV(J)$
is obtained as the image of $W(K)$ on $V$
by the morphism \eqref{eq:8}.
Then $W(K)_m\bV(J)$ and $\gr_m^{W(K)}\bV(J)$
are $\bR$-local systems on $D(J)^{\ast}$ for all $m$.
The inclusion
$\kernel(\nabla(J)) \longrightarrow \cO_{D(J)} \otimes \cV$
induces the isomorphism
\begin{equation}
(\cO_{D(J)} \otimes \kernel(\nabla(J)))|_{D(J)^{\ast}}
\simeq
(\cO_{D(J)} \otimes \cV)|_{D(J)^{\ast}}
\end{equation}
under which we have the identification
\begin{equation}
\cO_{D(J)^{\ast}} \otimes W(K)_m\bV(J)
\simeq
W(K)_m(\cO_{D(J)} \otimes \cV)|_{D(J)^{\ast}}
\end{equation}
for all $m$.
Therefore we have
\begin{equation}
\label{eq:3}
\bC \otimes \bV(J)_x
\simeq
\cV(x)
\end{equation}
for any $x \in D(J)^{\ast}$,
under which we have the identification
\begin{equation}
\bC \otimes W(K)_m\bV(J)_x
\simeq
W(K)_m\cV(x)
\end{equation}
for all $m$.
\end{para}

Now the following is the precise form of the condition
($m$MH) instead of the one in 5.8 of \cite{Fujino-Fujisawa}.
The point is that this condition must be considered
in the local situation as in \ref{para:2}.

\begin{defn}
Let $(X,D)$ and $(\cV, \nabla, \bV)$ be
as in \ref{para:2}
and $F$ a finite decreasing filtration on $\cV$
such that $\gr_F^p\cV$ is $\cO_X$-coherent
for all $p$.
Note that $\gr_F^p\cV$ is not assumed
to be a locally free $\cO_X$-module.
Then we say that $(\cV, \nabla, \bV, F)$ satisfies
the condition ($m$MH) for $m \in \bZ$,
if the data
\begin{equation}
((\bV(J)_x, W(J)[m]),
(\cV(x), W(J)[m], F))
\end{equation}
is an $\bR$-mixed Hodge structure for any $J \subset I$
and for any $x \in D(J)^{\ast}$,
where the isomorphism $\bC \otimes \bV(J)_x \simeq \cV(x)$
is given by \eqref{eq:3}.
Sometimes we say that $F$,
instead of $(\cV, \nabla, \bV, F)$, satisfies
the condition $(m{\rm MH})$
if there is no danger of confusion.
\end{defn}

\begin{thm}
\label{thm:1}
Let $(X,D)$ and $(\cV, \nabla, \bV)$
be as in {\rm \ref{para:2}},
$U$ an open subset of $X \setminus D$
such that $X \setminus U$ is a nowhere dense closed analytic subset of $X$
and $F$ a finite decreasing filtration on $\cV|_U$.
If the data
\begin{equation}
(\bV|_{U}, (\cV|_U, F))
\end{equation}
is a polarizable variation of $\bR$-Hodge structures of weight $m$ on $U$,
then there exists
a unique finite decreasing filtration $\widetilde{F}$ on $\cV$
such that the following conditions are satisfied:
\begin{itemize}
\item
$\widetilde{F}^p\cV|_U=F^p\cV|_U$ for all $p$
\item
$\gr_{\widetilde{F}}^p\cV$
is a locally free $\cO_X$-module of finite rank for all $p$
\item
$(\cV, \widetilde{F})$ satisfies the condition $(m{\rm MH})$.
\end{itemize}
\end{thm}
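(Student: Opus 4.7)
The plan is to separate the theorem into \emph{uniqueness} and \emph{existence}, and for existence to reduce to the local setup of \ref{para:2} where I can invoke the nilpotent orbit theorem of Schmid together with its several-variable generalization by Cattani--Kaplan--Schmid. For uniqueness, suppose $\widetilde F$ and $\widetilde F'$ both satisfy the three conditions. By descending induction on $p$, each $\widetilde F^p\cV$ and $\widetilde F^{'p}\cV$ is a saturated $\cO_X$-subsheaf of $\cV$, since its quotient admits a finite filtration with locally free graded pieces and is therefore torsion-free. Two saturated coherent subsheaves of the locally free sheaf $\cV$ on the smooth complex variety $X$ that agree on the complement of a nowhere dense analytic subset must coincide, so $\widetilde F^p\cV = \widetilde F^{'p}\cV$ for every $p$. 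This uniqueness also reduces existence to a local question on $X$, so I may work throughout in the setup of \ref{para:2}.

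Working in the local model $X = \Delta^n$, $\cV = \cO_X \otimes V$, with $\nabla$ and $\bV$ given by the commuting nilpotent endomorphisms $N_1,\dots,N_l$ of $V$ via \eqref{eq:2} and \eqref{eq:5}, I view the filtration $F$ on $\cV|_U$ as a holomorphic map $\varphi_F: U \longrightarrow \check D$ into the flag variety $\check D$ of $\bC \otimes V$ parametrising decreasing filtrations with the prescribed Hodge numbers. The twisted map
\begin{equation}
\varphi_{\rm tw}(t) := \exp\!\Bigl(-(2\pi\sqrt{-1})^{-1}\sum_{i=1}^{l}(\log t_i)N_i\Bigr) \cdot \varphi_F(t)
\end{equation}
is, thanks to \eqref{eq:5}, single-valued and holomorphic on $U$ with values in the compact complex manifold $\check D$. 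Standard Hartogs-type extension (together with Schmid--CKS applied near any codimension-one component of $X^{\ast}\setminus U$ after resolving its singularities) extends $\varphi_{\rm tw}$ to a holomorphic map on $X^{\ast}$, and then the nilpotent orbit theorem of Schmid (for $l=1$) and its several-variable version by Cattani--Kaplan--Schmid extend $\varphi_{\rm tw}$ holomorphically across $D$ to all of $X$. Untwisting produces a filtration $\widetilde F$ on $\cV$ whose graded pieces are locally free of constant rank equal to the Hodge numbers (since $\check D$ is a fixed flag variety), which agrees with $F$ on $U$ by construction.

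Condition $(m\mathrm{MH})$ is then exactly the content of the limit mixed Hodge structure theorem of Schmid--CKS: for $J \subset I$ and $x \in D(J)^{\ast}$, the triple $(\bV(J)_x, W(J)[m], \widetilde F_x)$, under the identification \eqref{eq:3}, is the limit $\bR$-mixed Hodge structure of the polarized variation along the directions $\{t_i \to 0\}_{i \in J}$, polarized by the extended polarization. The main obstacle is therefore the holomorphic extension across $D$ and the $(m\mathrm{MH})$ verification; both rest on Schmid's nilpotent orbit theorem and its several-variable generalization, whereas uniqueness, the local reduction, and the extension within $X^{\ast}$ are essentially formal.
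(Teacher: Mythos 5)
Your argument is correct and follows essentially the same route as the paper, whose entire proof of this theorem is the citation ``See \cite{Schmid}'' --- that is, an appeal to the nilpotent orbit theorem and the (several-variable, Cattani--Kaplan--Schmid) limit mixed Hodge structure theorem, which is exactly what you invoke after the formal uniqueness, gluing, and untwisting reductions. Your write-up simply supplies the details that the paper's citation leaves implicit.
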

\begin{proof}
See \cite{Schmid}.
\end{proof}

\begin{lem}
Let $(X,D)$, $(\cV, \nabla, \bV)$ and $U$
be as in Theorem {\rm \ref{thm:1}}
and $F$ a finite decreasing filtration on $\cV$
such that $\gr_F^p\cV$ is $\cO_X$-coherent for all $p$.
Moreover we assume that
$(\bV, (\cV, F))|_U$ is
a polarizable variation of $\bR$-Hodge structures on $U$.
Then $\gr_F^p\cV$ is $\cO_X$-locally free for all $p$
if and only if $(\cV, F)$ satisfies the condition $(m{\rm MH})$.
\end{lem}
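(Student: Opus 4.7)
The plan is to apply Theorem~\ref{thm:1} to the restriction $F|_U$, which underlies a polarizable variation of $\bR$-Hodge structures of weight $m$ by hypothesis, in order to produce a filtration $\widetilde{F}$ on $\cV$ that agrees with $F$ on $U$, has locally free graded pieces, and satisfies the condition $(m{\rm MH})$. Both directions of the equivalence will then follow from a comparison of $F$ with this $\widetilde{F}$.

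For the ``if'' direction, I would assume that $(\cV, F)$ satisfies $(m{\rm MH})$. At every $x \in X$, lying in some stratum $D(J)^{\ast}$, the Deligne bigrading of the $\bR$-mixed Hodge structure $(\bV(J)_x, W(J)[m], F)$ yields the pointwise identity
\[
\sum_p \dim_{\bC}\gr_F^p\cV(x) = \dim_{\bC}\cV(x),
\]
whose right-hand side equals the rank of $\cV$ and is independent of $x$. On the other hand, since $\gr_F^p\cV$ is $\cO_X$-coherent, the function $x \mapsto \dim_{\bC}\gr_F^p\cV(x)$ is upper semi-continuous on $X$, and it is locally constant on $U$ because $F|_U$ underlies a polarizable VHS. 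Writing $h^p$ for this locally constant value on $U$, upper semi-continuity gives $\dim_{\bC}\gr_F^p\cV(x) \ge h^p$ near each point, and summing over $p$ together with the equality $\sum_p h^p = \dim_{\bC}\cV(x)$ forces term-wise equality. Hence $\dim_{\bC}\gr_F^p\cV(x)$ is locally constant on $X$, and $\gr_F^p\cV$ is $\cO_X$-locally free.

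For the ``only if'' direction, I would assume that $\gr_F^p\cV$ is locally free for all $p$. A descending induction from $F^{N+1}\cV = 0$, using that an extension of locally free sheaves of finite rank by the same is itself locally free, shows that every $F^p\cV$ and every $\cV/F^p\cV$ is locally free, so each $F^p\cV$ is a subbundle of $\cV$; the same holds for each $\widetilde{F}^p\cV$. Since $F^p\cV$ and $\widetilde{F}^p\cV$ agree on the dense open $U$, the composition
\[
F^p\cV \hookrightarrow \cV \twoheadrightarrow \cV/\widetilde{F}^p\cV
\]
is a morphism of coherent $\cO_X$-modules which vanishes on $U$ and whose target is locally free, hence torsion-free; because $X$ is smooth, such a morphism must vanish on all of $X$. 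A symmetric argument yields $F^p\cV = \widetilde{F}^p\cV$ for every $p$, so $F = \widetilde{F}$, and the condition $(m{\rm MH})$ is inherited from Theorem~\ref{thm:1}. The main obstacle I expect is cleanly deriving the dimension identity $\sum_p \dim_{\bC}\gr_F^p\cV(x) = \dim_{\bC}\cV(x)$ in the ``if'' direction, which relies in an essential way on the Deligne bigrading of an $\bR$-mixed Hodge structure; the subbundle uniqueness argument used in the ``only if'' direction is a standard torsion-freeness consideration.
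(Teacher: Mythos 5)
Your ``only if'' direction (local freeness of the $\gr_F^p\cV$ implies $(m\mathrm{MH})$) is sound: once every $\gr_F^p\cV$ is locally free, each $F^p\cV$ and $\cV/F^p\cV$ is locally free, and your torsion-freeness argument identifies $F$ with the extension $\widetilde{F}$ of Theorem~\ref{thm:1}, from which $(m\mathrm{MH})$ is inherited. This matches the route the paper follows (by reference to Lemma 5.10 of Fujino--Fujisawa).

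The ``if'' direction, however, has a genuine gap. Your dimension count conflates two different objects: the fibre $(\gr_F^p\cV)(x)=(\gr_F^p\cV)\otimes k(x)$ of the coherent sheaf $\gr_F^p\cV$, which is what upper semicontinuity applies to, and the graded piece $\gr_F^p(\cV(x))$ of the filtration induced on the fibre $\cV(x)$, which is what enters the mixed Hodge structure in the condition $(m\mathrm{MH})$. For the latter, the identity $\sum_p\dim_{\bC}\gr_F^p(\cV(x))=\dim_{\bC}\cV(x)$ is a triviality valid for any finite filtration and uses no Hodge theory; for the former one only has canonical surjections $(\gr_F^p\cV)(x)\twoheadrightarrow\gr_F^p(\cV(x))$, hence only the inequality $\sum_p\dim_{\bC}(\gr_F^p\cV)(x)\ge\dim_{\bC}\cV(x)$, and strict inequality occurs exactly when local freeness fails. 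For instance, with $\cV=\cO_{\Delta}$, $F^0\cV=\cV$, $F^1\cV=t\cO_{\Delta}$, $F^2\cV=0$, the sum of the fibre dimensions at $0$ is $2$, not $1$. So the mixed Hodge structure on the fibre does not supply the upper bound that your termwise-equality argument requires, and the step ``summing over $p$ \dots forces term-wise equality'' does not go through. This is precisely where the paper does its real work: it first invokes the inclusion $F^p\cV\subset\widetilde{F}^p\cV$ (Lemma 5.1 of Fujino--Fujisawa), uses $(m\mathrm{MH})$ to show that $F$ and $\widetilde{F}$ induce the \emph{same} filtration on every fibre $\cV(x)$ (two nested filtrations on the same real vector space, both defining mixed Hodge structures with the same weight filtration, must coincide), and then upgrades this fibrewise equality to the sheaf equality $F^p\cV=\widetilde{F}^p\cV$ by applying Nakayama's lemma to $\cC^p=\widetilde{F}^p\cV/F^p\cV$. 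You would need to replace your semicontinuity argument by a comparison with $\widetilde{F}$ of this kind.
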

\begin{proof}
The proof is almost the same
as the one of Lemma 5.10 in \cite{Fujino-Fujisawa}.
Here we remark some missing point there.
Let $\widetilde{F}$ be the filtration on $\cV$
in Theorem \ref{thm:1}.
Lemma 5.1 of \cite{Fujino-Fujisawa}
implies the inclusion $F^p\cV \subset \widetilde{F}^p\cV$
for all $p$.
If $F$ satisfies the condition ($m${\rm MH}),
then $F$ and $\widetilde{F}$ induces the same filtration on $\cV(x)$
for any $x \in X$
as shown in the proof of Lemma 5.10 in \cite{Fujino-Fujisawa}.
Then we can see the coincidence of $F$ and $\widetilde{F}$ as follows:
The coherent $\cO_X$-module $\widetilde{F}^p\cV/F^p\cV$
is denoted by $\cC^p$ for a while.
From the fact
that $\cV/\widetilde{F}^p\cV$ is locally free of finite rank,
we have
${\frak m}_x\cV_x \cap \widetilde{F}^p\cV_x={\frak m}_x\widetilde{F}^p\cV_x$
for every $x \in X$ and for every $p$.
Then we have an commutative diagram
\begin{equation}
\begin{CD}
0 @>>> {\frak m}_x\cV_x \cap F^p\cV_x
  @>>> F^p\cV_x
  @>>> F^p(\cV(x)) @>>> 0 \\
@. @VVV @VVV @| \\
0 @>>> {\frak m}_x\widetilde{F}^p\cV_x
  @>>> \widetilde{F}^p\cV_x
  @>>> \widetilde{F}^p(\cV(x)) @>>> 0
\end{CD}
\end{equation}
with exact rows.
The easy diagram chasing shows the equality
$\cC^p_x={\frak m}_x\cC^p_x$.
Thus $\cC_x^p=0$ by Nakayama's lemma  for every $x \in X$.
\end{proof}

Now we define a category $\fpvhs(X,D)_{\bR}$,
which is a replacement of $\gpfmhs(X,D)_{\bR}$
given in \cite[Section 4x1]{Brunebarbe}.

\begin{defn}
\label{defn:2}
Let $X$ be a \ca space.
The category of the filtered vector bundles is denoted by
$\fvb(X)$ as in \cite{Brunebarbe}.
More precisely, the category $\fvb(X)$ is defined as follows:
An object of $\fvb(X)$ is a pair $(\cV, F)$,
where $\cV$ is a locally free $\cO_X$-module of finite rank
and $F$ is a finite decreasing filtration on $\cV$.
For two objects $(\cV_1, F)$ and $(\cV_2, F)$ of $\fvb(X)$,
a morphism $(\cV_1, F) \longrightarrow (\cV_2, F)$ in $\fvb(X)$
is a morphism of $\cO_X$-modules $\cV_1 \longrightarrow \cV_2$
preserving the filtration $F$ on $\cV_1$ and $\cV_2$.
\end{defn}

\begin{defn}
Let $(X, D)$ be a log pair.
For an object $(\cV,F)$ of $\fvb(X)$,
we consider the data
\begin{equation}
(W, \{\nabla_m\}_{m \in \bZ}, \{\bV_m\}_{m \in \bZ}, \{S_m\}_{m \in \bZ})
\end{equation}
consisting of
\begin{itemize}
\item
a finite increasing filtration $W$
on $\cV$
such that $\gr_F^p\gr_m^W\cV$ is locally free of finite rank
for all $m,p \in \bZ$,
\item
a nilpotent integrable log connection
$\nabla_m$ on $\gr_m^W\cV$
satisfying
\begin{equation}
\nabla_m(F^p\gr_m^W\cV) \subset F^{p-1}\gr_m^W\cV
\end{equation}
for all $p$
(the Griffiths transversality)
and for every $m \in \bZ$,
\item
an $\bR$-local subsystem $\bV_m$ of
$\kernel(\nabla_m)|_{X \setminus D}$
on $X \setminus D$
such that
\begin{equation}
\bC \otimes \bV_m=\kernel(\nabla_m)|_{X \setminus D}
\end{equation}
for every $m$,
\item
a morphism of $\cO_X$-modules
$S_m:
\gr_m^W\cV \otimes \gr_m^W\cV
\longrightarrow
\cO_X$
satisfying the equality
\begin{equation}
\label{eq:6}
S_m \cdot (\nabla_m \otimes \id)+S_m \cdot (\id \otimes \nabla_m)
=d \cdot S_m,
\end{equation}
for every $m \in \bZ$,
where $d$ is the usual differential regarded as a morphism
$\cO_X \longrightarrow \omega^1_X$.
\end{itemize}
Here an integrable log connection
is said to be nilpotent for short,
if all of its residues are nilpotent.
We remark that $\nabla_m$
are assumed to be nilpotent in the data above.
It is equivalent
that $\bV_m$ is assumed to be of unipotent local monodromies.

The data
$(W, \{\nabla_m\}_{m \in \bZ}, \{\bV_m\}_{m \in \bZ}, \{S_m\}_{m \in \bZ})$
above is called a structure of
filtered variation of polarized $\bR$-Hodge structures on $(\cV, F)$
if the data
\begin{equation}
(\bV_m, (\gr_m^W\cV, F)|_{X \setminus D},
\nabla_m|_{X \setminus D}, S_m|_{X \setminus D})
\end{equation}
is a variation of polarized $\bR$-Hodge structures of a certain weight
on $X \setminus D$ for all $m$.
The category $\fpvhs(X,D)_{\bR}$ is defined
as a full subcategory of $\fvb(X)$
consisting of the objects
admitting a structure of
filtered variation of polarized $\bR$-Hodge structures.
By the definition above,
for two objects
$(\cV_1, F)$ and $(\cV_2, F)$ of $\fpvhs(X,D)_{\bR}$,
a morphism from $(\cV_1, F)$ to $(\cV_2, F)$
is just a morphism of $\cO_X$-modules
$\cV_1 \longrightarrow \cV_2$ preserving the filtration $F$.
Here we note that no assumption is imposed
concerning about the structures of
filtered variation of polarized $\bR$-Hodge structures
for a morphism in $\fpvhs(X,D)_{\bR}$.
\end{defn}

\begin{rmk}
An object $(\cV, F)$ of $\fpvhs(X,D)_{\bR}$
is said to be pure,
if there exists
a structure of
filtered variation of polarized $\bR$-Hodge structures
$(W, \{\nabla_m\}_{m \in \bZ}, \{\bV_m\}_{m \in \bZ}, \{S_m\}_{m \in \bZ})$
on $(\cV, F)$
such that $W_{m_0-1}\cV=0$ and $W_{m_0}\cV=\cV$
for some integer $m_0$.
\end{rmk}

\begin{rmk}
\label{rmk:1}
For an object $(\cV, F)$ of $\fpvhs(X,D)_{\bR}$,
the $\cO_X$-module
$\gr_F^p\cV$ is locally free of finite rank for all $p$.
\end{rmk}

\begin{para}
Let $(\cV, F)$ be an object of $\fpvhs(X,D)_{\bR}$
and $(W, \{\nabla_m\}, \{\bV_m\}, \{S_m\})$
a structure of filtered polarized variation of $\bR$-Hodge structure
on $(\cV,F)$.
We set
\begin{equation}
\gr_F\gr^W\cV=\bigoplus_{p,m}\gr_F^p\gr_m^W\cV
\end{equation}
which is canonically isomorphic to
\begin{equation}
\gr^W\gr_F\cV
=\bigoplus_{p, m}\gr_m^W\gr_F^p\cV
\end{equation}
as usual.
The integrable log connection $\nabla_m$ induces
a morphism of $\cO_X$-modules
\begin{equation}
\theta_{m,p}:
\gr_F^p\gr_m^W\cV
\longrightarrow
\omega^1_X
\otimes
\gr_F^{p-1}\gr_m^W\cV
\end{equation}
by the Griffiths transversality.
We set
\begin{equation}
\theta=\bigoplus_{m,p}\theta_{m,p}:
\gr_F\gr^W\cV
\longrightarrow
\omega^1_X
\otimes
\gr_F\gr^W\cV
\end{equation}
as in \cite{Brunebarbe}.
The morphisms $\theta_{m,p}$
and $\theta$ are called the Higgs fields
associated to the given
structure of filtered polarized variation of $\bR$-Hodge structure,
or simply the Higgs fields associated to $(\cV,F)$
by abuse of the language.
For any $\chi \in \Theta_X(-\log D)=\shom_{\cO_X}(\omega^1_X, \cO_X)$,
the composite
\begin{equation}
\begin{CD}
\gr_F\gr^W\cV
@>{\theta}>>
\omega_X^1 \otimes \gr_F\gr^W\cV
@>{\chi \otimes \id}>>
\gr_F\gr^W\cV
\end{CD}
\end{equation}
is denoted by $\theta(\chi)$.
Similarly, $\theta_{m,p}(\chi)$ denotes the composite
\begin{equation}
\begin{CD}
\gr_F^p\gr_m^W\cV
@>{\theta_{m,p}}>>
\omega_X^1 \otimes \gr_F^{p-1}\gr_m^W\cV
@>{\chi \otimes \id}>>
\gr_F^{p-1}\gr_m^W\cV
\end{CD}
\end{equation}
for all $m,p$
\end{para}

Now we remark about
the pull-back of an object of $\fpvhs(X,D)_{\bR}$
by a morphism of log pairs.

\begin{para}
Let $(X,D)$ and $(Y,E)$ be log pairs
and $f: (Y,E) \longrightarrow (X,D)$ a morphism of log pairs,
that is,
a morphism of complex varieties $f: Y \longrightarrow X$
with the property
$f^{-1}D \subset E$.

Let $\cV$ be a locally free $\cO_X$-module of finite rank
equipped with a finite decreasing filtration $F$.
On a locally free $\cO_Y$-module $f^{\ast}\cV$,
a finite decreasing filtration $F$ is defined by
\begin{equation}
\label{eq:24}
F^pf^{\ast}\cV=\image(f^{\ast}F^p\cV \longrightarrow f^{\ast}\cV)
\end{equation}
for all $p$.
This filtration is called
the pull-back of the filtration $F$ on $\cV$.
Thus a functor
\begin{equation}
\label{eq:30}
f^{\ast}: \fvb(X) \longrightarrow \fvb(Y)
\end{equation}
is obtained.
We have the canonical surjective morphisms
\begin{equation}
\begin{split}
&f^{\ast}F^p\cV \longrightarrow F^pf^{\ast}\cV \\
&f^{\ast}\gr_F^p\cV \longrightarrow \gr_F^pf^{\ast}\cV
\end{split}
\end{equation}
for all $p$.
If we assume that $\gr_F^p\cV$ is locally free for all $p$,
then these morphisms are isomorphisms for all $p$.

Let $(\cV,F)$ be an object of $\fpvhs(X,D)_{\bR}$.
We fix
a structure of filtered polarized variation of $\bR$-Hodge structure
\begin{equation}
(W, \{\nabla_m\}_{m \in \bZ}, \{\bV_m\}_{m \in \bZ}, \{S_m\}_{m \in \bZ})
\end{equation}
on $(\cV,F)$.
An finite increasing filtration $W$ on $f^{\ast}\cV$ is defined by
\begin{equation}
W_mf^{\ast}\cV=\image(f^{\ast}W_m\cV \longrightarrow f^{\ast}\cV)
\end{equation}
for all $m$.
The filtration $W$ on $\gr_F^p\cV$
induces the filtration $W$ on $f^{\ast}\gr_F^p\cV$
as in \eqref{eq:24}.
Similarly, the filtration $F$ on $\gr_m^W\cV$
induces the filtration $F$ on $f^{\ast}\gr_m^W\cV$.
Since $\gr_F^p\gr_m^W\cV$ is assumed to be locally free of finite rank,
we obtain the following:
\begin{itemize}
\item
The canonical morphism
\begin{equation}
\label{eq:36}
f^{\ast}\gr_F^p\cV \longrightarrow \gr_F^pf^{\ast}\cV
\end{equation}
is an isomorphism for all $p$,
under which the filtration $W$ on the both sides
are identified.
\item
The canonical morphism
\begin{equation}
\label{eq:28}
f^{\ast}\gr_m^W\cV \longrightarrow \gr_m^Wf^{\ast}\cV
\end{equation}
is an isomorphism for all $m$,
under which the filtration $F$ on the both sides
are identified.
\item
In particular, we have the isomorphisms
\begin{equation}
\label{eq:32}
\begin{split}
&f^{\ast}\gr_m^W\gr_F^p\cV
\overset{\simeq}{\longrightarrow}
\gr_m^Wf^{\ast}\gr_F^p\cV
\overset{\simeq}{\longrightarrow}
\gr_m^W\gr_F^pf^{\ast}\cV \\
&f^{\ast}\gr_F^p\gr_m^W\cV
\overset{\simeq}{\longrightarrow}
\gr_F^pf^{\ast}\gr_m^W\cV
\overset{\simeq}{\longrightarrow}
\gr_F^p\gr_m^Wf^{\ast}\cV
\end{split}
\end{equation}
for all $m,p$.
Therefore $\gr_F^p\gr_m^Wf^{\ast}\cV$ is locally free of finite rank
for all $m,p$.
\end{itemize}
The proof of these facts are similar
to Lemma \ref{lem:10}
(see \cite[Lemma 2.7]{FujisawaLHSSV2}).
Via the identification \eqref{eq:28},
we obtain the data
\begin{equation}
f^{\ast}\nabla_m, f^{-1}\bV_m, f^{\ast}S_m
\end{equation}
on $\gr_m^Wf^{\ast}\cV$ for all $m$.
Then it is easy to see that the data
\begin{equation}
\label{eq:31}
(W, \{f^{\ast}\nabla_m\}, \{f^{-1}\bV_m\}, \{f^{\ast}S_m\})
\end{equation}
on $f^{\ast}\cV$ is
a structure of filtered polarized variation of $\bR$-Hodge structure.
Therefore $(f^{\ast}\cV, F)$ is an object of $\fpvhs(Y,E)_{\bR}$.
Thus we obtain a functor
\begin{equation}
f^{\ast}:
\fpvhs(X,D)_{\bR}
\longrightarrow
\fpvhs(Y,E)_{\bR}
\end{equation}
as the restriction of the functor \eqref{eq:30}.
By definition,
the Higgs field associated to the data
\eqref{eq:31} on $(f^{\ast}\cV, F)$
coincides with the composite
\begin{equation}
f^{\ast}\gr_F\gr^W\cV
\overset{f^{\ast}\theta}{\longrightarrow}
f^{\ast}\omega^1_X \otimes f^{\ast}\gr_F\gr^W\cV
\longrightarrow
\omega^1_Y \otimes f^{\ast}\gr_F\gr^W\cV
\end{equation}
under the identification \eqref{eq:32},
where $\theta$ denotes the Higgs field associated to $(\cV, F)$
and the second morphism is induced
from the canonical morphism
$f^{\ast}\omega^1_X \longrightarrow \omega^1_Y$.
\end{para}

For the later use,
we discuss the dual of an object of $\fpvhs(X,D)_{\bR}$.

\begin{para}
Let $(\cV,F)$ be an object of $\fpvhs(X,D)_{\bR}$.
On the dual $\cO_X$-module
$\cV^{\ast}=\shom_{\cO_X}(\cV, \cO_X)$ of $\cV$,
a finite decreasing filtration $F$ is defined
as in \eqref{eq:25}.
Now we fix
a structures of filtered polarized variation of $\bR$-Hodge structure
$(W, \{\nabla_m\}, \{\bV_m\}, \{S_m\})$
on $(\cV,F)$.
Then 
a finite increasing filtration $W$ on $\cV^{\ast}$ is defined
by a similar way to \eqref{eq:25},
that is,
\begin{equation}
W_m\cV^{\ast}
=\{f \in \cV^{\ast}
\mid
f(W_{-m-1}\cV)=0\}
\simeq
(\cV/W_{-m-1}\cV)^{\ast}
\end{equation}
for every $m$.
By Lemma \ref{lem:6},
we have the canonical isomorphisms
\begin{align}
&\gr_F^p\cV^{\ast}
\overset{\simeq}{\longrightarrow}
(\gr_F^{-p}\cV)^{\ast} \\
&\gr_m^W\cV^{\ast}
\overset{\simeq}{\longrightarrow}
(\gr_{-m}^W\cV)^{\ast}
\label{eq:26} \\
&\gr_F^p\gr_m^W\cV^{\ast}
\overset{\simeq}{\longrightarrow}
(\gr_F^{-p}\gr_{-m}^W\cV)^{\ast}
\end{align}
for all $m,p$.
In particular,
$\gr_F^p\gr_m^W\cV^{\ast}$ is locally free of finite rank
for all $m,p$.
Under the identification \eqref{eq:26},
we can easily check that the data
\begin{equation}
(W, \{\nabla_{-m}^{\ast}\}, \{\bV_{-m}^{\ast}\}, S_{-m}^{\ast})
\end{equation}
is a structure of filtered polarized variation of $\bR$-Hodge structure
on $(\cV^{\ast}, F)$,
where $\nabla_{-m}^{\ast}$ is the dual connection of $\nabla_{-m}$
on $(\gr_{-m}^W\cV)^{\ast}$,
$\bV^{\ast}_{-m}$ is the dual local system of $\bV_m$,
that is, $\bV^{\ast}_m=\shom_{\bR}(\bV_m, \bR)$,
and $S_{-m}^{\ast}$ is the dual polarization of $S_{-m}$.
This structure of filtered polarized variation of $\bR$-Hodge structure
on $(\cV^{\ast}, F)$
is called the dual structure
of the given filtered polarized variation of $\bR$-Hodge structure.
Thus $(\cV^{\ast},F)$ is an object of $\fpvhs(X,D)_{\bR}$.
The Higgs field of $(\cV^{\ast},F)$
associated to the dual structure above
is denoted by
$\theta^{\ast}=\bigoplus_{m,p}\theta_{m,p}^{\ast}$
for a while.
Since the dual connection
\begin{equation}
\nabla_{-m}^{\ast}:
(\gr_{-m}^W\cV)^{\ast}
\longrightarrow
\omega_X^1 \otimes (\gr_{-m}^W\cV)^{\ast}
\end{equation}
is defined by
\begin{equation}
\nabla_{-m}^{\ast}(\varphi)(v)
=d(\varphi(v))-(\id \otimes \varphi)(\nabla_{-m}(v))
\end{equation}
for $\varphi \in (\gr_{-m}^W\cV)^{\ast}$,
$v \in \gr_{-m}^W\cV$
under the identification
\begin{equation}
\omega_X^1 \otimes (\gr_{-m}^W\cV)^{\ast}
\simeq
\shom_{\cO_X}(\gr_{-m}^W\cV, \omega_X^1),
\end{equation}
the morphism
\begin{equation}
\theta_{m,p}^{\ast}(\chi):
\gr_F^p\gr_m^W\cV^{\ast}
\longrightarrow
\gr_F^{p-1}\gr_m^W\cV^{\ast}
\end{equation}
is given by
\begin{equation}
\theta_{m,p}^{\ast}(\chi)
=-(\theta_{-m,-p+1}(\chi))^{\ast}:
(\gr_F^{-p}\gr_{-m}^W\cV)^{\ast}
\longrightarrow
(\gr_F^{-p+1}\gr_{-m}^W\cV)^{\ast}
\end{equation}
under the identification
$\gr_F^p\gr_m^W\cV^{\ast} \simeq (\gr_F^{-p}\gr_{-m}^W\cV)^{\ast}$ and
$\gr_F^{p-1}\gr_m^W\cV^{\ast} \simeq (\gr_F^{-p+1}\gr_{-m}^W\cV)^{\ast}$
for all $m,p$.
Therefore we have
\begin{equation}
\label{eq:27}
\theta^{\ast}(\chi)=-(\theta(\chi))^{\ast}
\end{equation}
for any $\chi \in \Theta_X(\log D)$
via the identifications above.
\end{para}

In \cite[Definition 4.1]{Brunebarbe},
the category $\gpfmhs(X,D)_{\bR}$
was defined.
The following example shows that
its definition contains some uncertainty.
This is the reason why
we introduce a new category $\fpvhs(X,D)_{\bR}$
in this article.

\begin{exa}
\label{exa:1}
Let $(X, D)$ be a log pair and
$(\cV, F)$ an object of $\fvb(X)$.
The set of all the infinite sequences
$m=(m_1, m_2, m_3, \dots)$ with $m_i \in \bZ$
is denoted by $\bZ^{\bN}$
and the subset of $\bZ^{\bN}$
consisting of $m=(m_1, m_2, m_3, \dots)$
with $m_i=0$ for almost all $i$
is denoted by $\bZ^{\infty}$
(cf. the paragraph before Definition 4.1
in \cite{Brunebarbe}).
For $m=(m_1, m_2, m_3, \dots) \in \bZ^{\infty}$,
we set
\begin{equation}
\begin{cases}
W_m\cV=0 \quad
&\text{if $m_1 \le 0$} \\
W_m\cV=\cV \quad
&\text{if $m_1 \ge 1$}.
\end{cases}
\end{equation}
Then we can easily check
that $W$ defines an increasing filtration on $\cV$.
Moreover, it is easy to see the equality
\begin{equation}
\gr_m^W\cV
=W_m\cV\Bigl/\bigcup_{m' < m}W_{m'}\cV
=0
\end{equation}
for all $m \in \bZ^{\infty}$.
Therefore any object $(\cV, F)$ of $\fvb(X)$
underlies an object of $\gpfmhs(X,D)_{\bR}$
by definition.
\end{exa}

\section{Restriction functor}
\label{sec:restriction-functor}

In this section,
an alternative restriction functor
$\Phi_{D(J)}$ will be constructed
according to the original idea
of Dr. Brunebarbe.

\begin{para}
Let $(X, D)$ be a log pair.
We use the notation in \ref{notn:1}.
For any subset $J \subset I$,
the restriction functor
\begin{equation}
\label{eq:4}
\fvb(X) \longrightarrow \fvb(D(J))
\end{equation}
is defined by assigning the object
$(\cO_{D(J)} \otimes \cV, F)$ of $\fvb(D(J))$
to an object $(\cV, F)$ of $\fvb(X)$,
where $F$ on $\cO_{D(J)} \otimes \cV$
denotes the filtration induced from $F$ on $\cV$
by \eqref{eq:1}.
The functor $\Phi_{D(J)}$ is to be defined
as the restriction of the functor \eqref{eq:4}
to the full subcategory $\fpvhs(X,D)_{\bR}$
of $\fvb(X)$.
Therefore it is sufficient to prove the following:
for an object $(\cV, F)$ of $\fpvhs(X,D)_{\bR}$,
the object $(\cO_{D(J)} \otimes \cV, F)$ of $\fvb(D(J))$
admits a structure of filtered polarized variation of $\bR$-Hodge structures.

From now on,
we use the notation
$\cV(J)=\cO_{D(J)} \otimes \cV$ for short.
\end{para}

\begin{para}
\label{para:3}
Let $(\cV, F)$ be an object of $\fpvhs(X,D)_{\bR}$.
First we treat the pure case.
Namely, we are given the data
$(\nabla, \bV, S)$ consisting of
\begin{itemize}
\item
a nilpotent integrable log connection
$\nabla$ on $\cV$
satisfying the Griffiths transversality,
\item
an $\bR$-local subsystem $\bV$ of $\kernel(\nabla)|_{X \setminus D}$
with the property
$\bC \otimes \bV=\kernel(\nabla)|_{X \setminus D}$,
\item
a morphism of $\cO_X$-modules
$S: \cV \otimes \cV \longrightarrow \cO_X$
satisfying the condition as in \eqref{eq:6}
\end{itemize}
such that
$\gr_F^p\cV$ is locally free of finite rank for all $p$
and that the data
\begin{equation}
(\bV, (\cV, F)|_{X \setminus D},
\nabla|_{X \setminus D}, S|_{X \setminus D})
\end{equation}
is a variation of polarized $\bR$-Hodge structures
of certain weight on $X \setminus D$.

Let $J$ be a subset of $I$.
The finite increasing filtration $W(J)$ on $\cV(J)$
is already defined in \ref{para:1}
as the case of $K=J$.
Moreover, we obtain the following data
by applying the construction
in \cite[Section 5]{Fujino-Fujisawa}:
\begin{itemize}
\item
a nilpotent integrable log connection
$\nabla_k(J)$ on $\gr_k^{W(J)}\cV(J)$
satisfying the Griffiths transversality,
\item
a morphism of $\cO_{D(J)}$-modules
\begin{equation}
S_k(J):
\gr_k^{W(J)}\cV(J)
\otimes
\gr_k^{W(J)}\cV(J)
\longrightarrow
\cO_{D(J)}
\end{equation}
satisfying the condition as in \eqref{eq:6}.
\end{itemize}
However, the construction of the real structure
is missing in \cite[Section 5]{Fujino-Fujisawa}.
Here we present how we can obtain the real structure.
\end{para}

\begin{para}
The $\bR$-structure will be constructed
by gluing the local data.
So we return to the local situation.

Let $(X, D)$ be as in \ref{para:2}
in addition to the situation above.
Then we may assume
$\cV=\cO_X \otimes V$
and that $\nabla$ and $\bV$
are described by
\eqref{eq:2} and \eqref{eq:5}
as in \ref{para:2}.
\end{para}

\begin{lem}
\label{lem:1}
In the situation above,
\begin{equation}
N_j(W(J)_k) \subset W(J)_{k-2}
\end{equation}
for all $k$
and for any $j \in J$.
\end{lem}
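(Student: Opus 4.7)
The plan is to deduce this from the Cattani--Kaplan theorem on the monodromy weight filtration of a sum of commuting nilpotent endomorphisms in a polarized variation of $\bR$-Hodge structures. By the hypothesis of the pure case treated in \ref{para:3}, the restriction of $(\bV, (\cV,F), \nabla, S)$ to $X \setminus D$ is a polarized $\bR$-VHS with unipotent local monodromies along the components $D_i$. Via the explicit local model in \ref{para:2}, the endomorphisms $N_1,\dots,N_l$ agree, up to the fixed nonzero scalar $-(2\pi\sqrt{-1})^{-1}$, with the logarithms of these unipotent local monodromies. Since a nonzero scalar changes neither the monodromy weight filtration of a nilpotent operator nor its strict shift-by-two property, the Cattani--Kaplan theorem may be applied directly to $N_1,\dots,N_l$.

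The theorem asserts that the monodromy weight filtration of $\sum_{i \in J}a_iN_i$ with all $a_i>0$ is independent of the positive reals $a_i$, and hence in particular coincides with $W(J)=W(\sum_{i \in J}N_i)$. Applying this with $a_j=2$ and $a_i=1$ for $i \in J \setminus \{j\}$ yields $W(N_J+N_j)=W(J)$. Consequently, by the defining property of the monodromy weight filtration, both
\begin{equation}
(N_J+N_j)(W(J)_k) \subset W(J)_{k-2}
\quad\text{and}\quad
N_J(W(J)_k) \subset W(J)_{k-2}
\end{equation}
hold for every $k$. Subtracting the second inclusion from the first gives $N_j(W(J)_k) \subset W(J)_{k-2}$, which is the claim.

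The only substantive input is the Cattani--Kaplan theorem itself, which is a genuinely deep statement about polarized nilpotent orbits; I therefore do not anticipate any real obstacle beyond citing the appropriate version of that result. In the mixed case, the same argument applies to each graded piece $\gr_m^W\cV$ with the corresponding structure of variation of polarized $\bR$-Hodge structures, from which the assertion on $\cV$ follows by induction on the length of the weight filtration $W$.
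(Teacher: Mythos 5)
Your proof is correct and rests on exactly the same key input as the paper, namely the Cattani--Kaplan theorem that the monodromy weight filtration of $\sum_{i\in J}a_iN_i$ is independent of the choice of positive coefficients $a_i$. The only (cosmetic) difference is the final step: the paper applies the shift-by-two property to $N_j+c\sum_{i\in J\setminus\{j\}}N_i$ and lets $c\searrow 0$, whereas you apply it to $N_J+N_j$ and to $N_J$ and subtract the two inclusions inside the subspace $W(J)_{k-2}$ --- an equally valid, if anything slightly more elementary, conclusion.
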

\begin{proof}
By \cite[(3.3) Theorem]{Cattani-Kaplan},
$W(J)$ is the monodromy weight filtration
for the nilpotent endomorphism
$\sum_{i \in J}c_iN_i$ for any $c_i > 0$.
Therefore we have
\begin{equation}
\Bigl(N_j+c(\sum_{i \in J \setminus \{j\}}N_i)\Bigr)(W(J)_k)
\subset
W(J)_{k-2}
\end{equation}
for all $k$ and for any $c > 0$.
Thus we obtain the conclusion by sending $c \searrow 0$.
\end{proof}

\begin{para}
\label{para:4}
Now we take other coordinates $(s_1, s_2, \dots, s_n)$
of $X=\Delta^n$
with $D=\{s_1s_2 \cdots s_l=0\}$.
We may assume that $t_i$ and $s_i$ define the same divisor on $X$
for $i=1,2, \dots, l$.
Therefore there exist nowhere vanishing holomorphic functions $a_i$ on $X$
such that $s_i=a_it_i$ for $i=1,2, \dots, l$.

On the other hand,
the coordinates $(s_1, s_2, \dots, s_n)$ induce an isomorphism
$\psi: \cO_X \otimes V\longrightarrow \cV$
such that
$\psi^{\ast}\nabla$ and $\psi^{-1}\bV$
are described
as \eqref{eq:2} and \eqref{eq:5}
by using $s_i$ instead of $t_i$.
Then we can easily see that
the isomorphism
$\psi^{-1} \cdot \varphi : \cO_X \otimes V \longrightarrow \cO_X \otimes V$
is given by
\begin{equation}
\exp((2\pi\sqrt{-1})^{-1}\sum_{i=1}^l(\log a_i)N_i):
\cO_X \otimes V \longrightarrow \cO_X \otimes V
\end{equation}
by choosing an appropriate branch of $\log a_i$ for $i=1,2, \dots, l$.
By restricting to $D(J)$,
we obtain $\cV(J)$ and
\begin{equation}
\id \otimes \varphi:
\cO_{D(J)} \otimes V \longrightarrow \cV(J), \quad
\id \otimes \psi:
\cO_{D(J)} \otimes V \longrightarrow \cV(J)
\end{equation}
such that the isomorphism
$(\id \otimes \psi)^{-1} \cdot (\id \otimes \varphi)$
is given by
\begin{equation}
\label{eq:9}
\exp((2\pi\sqrt{-1})^{-1}\sum_{i=1}^l(\log a_i)N_i):
\cO_{D(J)} \otimes V \longrightarrow \cO_{D(J)} \otimes V.
\end{equation}
Thus the $\bR$-structures $\bV(J)$ on $\cO_{D(J)} \otimes V$
defined by using the coordinates $(t_1, t_2, \dots, t_n)$
and by using $(s_1,s_2, \dots, s_n)$
as in \eqref{eq:8}
are not identified via this isomorphism.
However, the isomorphism
$\gr_k^{W(J)}((\id \otimes \psi)^{-1} \cdot (\id \otimes \varphi))$
of the $\cO_{D(J)}$-modules
$\gr_k^{W(J)}(\cO_{D(J)} \otimes V)=\cO_{D(J)} \otimes \gr_k^{W(J)}V$
is given by
\begin{equation}
\exp((2\pi\sqrt{-1})^{-1}\sum_{i \in I \setminus J}(\log a_i)N_i):
\cO_{D(J)} \otimes \gr_k^{W(J)}V
\longrightarrow
\cO_{D(J)} \otimes \gr_k^{W(J)}V
\end{equation}
by Lemma \ref{lem:1}.
Therefore the $\bR$-structures
$\gr_k^{W(J)}\bV(J)$ on $\cO_{D(J)} \otimes \gr_k^{W(J)}V$
defined by using the coordinates $(t_1, t_2, \dots, t_n)$
and by using $(s_1,s_2, \dots, s_n)$
coincide via the isomorphism
$\gr_k^{W(J)}((\id \otimes \psi)^{-1} \cdot (\id \otimes \varphi))$.
Thus we obtain a globally defined $\bR$-structure
on $\gr_k^{W(J)}(\cO_{D(J)} \otimes V)$,
which is denoted by $\bV_k(J)$.
Hence we obtain the following lemma.
\end{para}

\begin{lem}
\label{lem:7}
Let $(X, D)$ be a log pair
and $(\cV, F)$ an object of $\fpvhs(X,D)_{\bR}$.
Moreover, we assume that $(\cV, F)$ is pure
as in {\rm \ref{para:3}}.
For any subset $J \subset I$,
the data
\begin{equation}
(W(J), \{\nabla_k(J)\}, \{\bV_k(J)\}, \{S_k(J)\})
\end{equation}
given in {\rm \ref{para:3}} and {\rm \ref{para:4}}
is a structure of filtered polarized variation of $\bR$-Hodge structure
on $(\cV(J), F)$.
Thus 
$(\cV(J), F)$
is an object of $\fpvhs(D(J), D(J) \cap D_{I \setminus J})_{\bR}$
for all $J \subset I$.
\end{lem}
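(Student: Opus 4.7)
The plan is to leverage the construction of \cite[Section 5]{Fujino-Fujisawa} for the $\cO_{D(J)}$-linear part of the data, namely the finite increasing filtration $W(J)$ with $\gr_F^p\gr_k^{W(J)}\cV(J)$ locally free, the nilpotent integrable log connection $\nabla_k(J)$ on $\gr_k^{W(J)}\cV(J)$ satisfying the Griffiths transversality, and the $\cO_{D(J)}$-bilinear form $S_k(J)$ satisfying the derivation-type identity \eqref{eq:6}. What remains to verify is: (i) the $\bR$-local system $\bV_k(J)$ of \ref{para:4} is a well-defined global $\bR$-local subsystem of $\kernel(\nabla_k(J))|_{D(J)^{\ast}}$ whose complexification equals $\kernel(\nabla_k(J))|_{D(J)^{\ast}}$; and (ii) the assembled data restricts on $D(J)^{\ast}$ to a polarized variation of $\bR$-Hodge structure of an appropriate weight.

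For (i) I would work in the local model of \ref{para:2}, where $\bV_k(J)$ is defined as the image of $\gr_k^{W(J)}V$ under the exponential morphism induced by \eqref{eq:8} on the graded piece. Horizontality for $\nabla_k(J)$ and the $\bC$-span property then follow by the same direct computation used for $\bV$ on $X\setminus D$. Global well-definedness is exactly the content of \ref{para:4} and rests on Lemma \ref{lem:1}: since $N_j$ acts as zero on $\gr_k^{W(J)}V$ for every $j\in J$, the coordinate-change isomorphism \eqref{eq:9} reduces, after applying $\gr_k^{W(J)}$, to the exponential of $(2\pi\sqrt{-1})^{-1}\sum_{i\in I\setminus J}(\log a_i)N_i$. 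This operator is precisely of the form appearing in the definition of $\bV_k(J)$ on an overlap, so the two local $\bR$-structures agree, giving a globally defined $\bV_k(J)$.

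The main obstacle will be (ii), which I would argue pointwise. Fix $x\in D(J)^{\ast}$ and a polydisk $T$ transversal to $D(J)$ at $x$, chosen so that $T\cap D$ is the union of the coordinate hyperplanes indexed by $J$. The restriction of the given polarized $\bR$-VHS to $T\setminus T\cap D$ has unipotent monodromy, and by Schmid's nilpotent orbit theorem \cite{Schmid} the pair $(W(J), F)$ induces an $\bR$-mixed Hodge structure (with the appropriate weight shift) on the fiber at $x$. The polarized Hodge structure theorem of Cattani--Kaplan \cite{Cattani-Kaplan} then implies that each $\gr_k^{W(J)}$ carries a polarized $\bR$-Hodge structure of the appropriate weight, polarized (up to the standard sign involving a power of the nilpotent cone) by $S_k(J)$. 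As $x$ varies in $D(J)^{\ast}$, the filtration $F$ varies holomorphically, the real structure $\bV_k(J)$ is horizontal by construction, Griffiths transversality for $\nabla_k(J)$ is inherited from that of $\nabla$, and $S_k(J)$ is flat; this yields the required polarized $\bR$-VHS on $D(J)^{\ast}$ and shows that $(\cV(J),F)$ is an object of $\fpvhs(D(J), D(J)\cap D_{I\setminus J})_{\bR}$.
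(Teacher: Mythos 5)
Your proposal is correct and follows essentially the same route as the paper: the $\cO_{D(J)}$-linear data is taken from \cite[Section 5]{Fujino-Fujisawa}, the global real structure is obtained by the gluing argument of \ref{para:4} resting on Lemma \ref{lem:1} (so that the coordinate-change operator on $\gr_k^{W(J)}$ involves only $N_i$ with $i\in I\setminus J$), and the polarized-VHS property on $D(J)^{\ast}$ is the content of Corollary 5.13 and Proposition 5.19 of \cite{Fujino-Fujisawa}, which you re-derive pointwise from \cite{Schmid} and \cite{Cattani-Kaplan}. The paper's proof is just the citation to those two results, so your write-up is an expanded but equivalent version of the same argument.
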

\begin{proof}
See Corollary 5.13 and Proposition 5.19 in \cite{Fujino-Fujisawa}.
\end{proof}

\begin{para}
\label{para:5}
Next, we treat the general case.
Let $(X,D)$ be a log pair
and $(\cV, F)$ an object of $\fpvhs(X,D)_{\bR}$.
We fix an structure of filtered polarized variation of $\bR$-Hodge structure
\begin{equation}
(W, \{\nabla_m\}, \{\bV_m\}, \{S_m\})
\end{equation}
on $(\cV, F)$.

Let $J$ be a subset of $I$.
We apply the construction above to 
$(\gr_m^W\cV, F)$,
and obtain
a structure of filtered polarized variation of $\bR$-Hodge structure
\begin{equation}
(W(J), \{\bV_l(J)\}, \{\nabla_l(J)\}, \{S_l(J)\})
\end{equation}
on $(\cO_{D(J)} \otimes \gr_m^W\cV,F)$.
By Lemma \ref{lem:5},
we have the canonical isomorphism
\begin{equation}
\label{eq:19}
\cO_{D(J)} \otimes \gr_m^W\cV
\overset{\simeq}{\longrightarrow}
\gr_m^W\cV(J),
\end{equation}
under which the filtration $F$ on the both sides coincide.
Introducing the data
$W(J), \nabla_l, \bV_l, S_l$ on $\gr_m^W\cV(J)$
from the data $W(J), \nabla_l(J), \bV_l(J), S_l(J)$
on $\cO_{D(J)} \otimes \gr_m^W\cV$
via the identification \eqref{eq:19},
we obtain
a structure of filtered polarized variation of $\bR$-Hodge structure
on $(\gr_m^W\cV(J),F)$.
The canonical surjection
\begin{equation}
W_m(\cV(J))
\longrightarrow
\gr_m^W\cV(J)
\end{equation}
is denoted by $\pi_m$.
Then we have
\begin{equation}
\begin{split}
W_{m-1}(\cV(J))
&\subset
\pi_m^{-1}(W(J)_{l-1}(\gr_m^W\cV(J))) \\
&\subset
\pi_m^{-1}(W(J)_l(\gr_m^W\cV(J)))
\subset
W_m(\cV(J))
\end{split}
\end{equation}
for all $m,l$.
Since $W$ on $\cV$
and $W(J)$ on $\gr_m^W\cV(J)$
are finite filtrations,
we obtain a refinement $(M, \varphi)$ of $W$ on $\cV(J)$
satisfying the following properties:
\begin{itemize}
\item
For any $k \in \bZ$, there exists an integer $l$
such that
\begin{equation}
\begin{split}
&M_k(\cV(J))
=\pi_{m(k)}^{-1}(W(J)_l(\gr_{m(k)}^W\cV(J))), \\
&M_{k-1}(\cV(J))
=\pi_{m(k)}^{-1}(W(J)_{l-1}(\gr_{m(k)}^W\cV(J))),
\end{split}
\end{equation}
\end{itemize}
where $m(k)$ is the integer defined in Definition \ref{defn:1}
for $\varphi$.
We fix the integer $l$
satisfying the conditions above for $k$
and denote it by $l(k)$.
Then we have
\begin{equation}
\begin{split}
&M_k\gr_{m(k)}^W\cV(J)=W(J)_{l(k)}\gr_{m(k)}^W\cV(J), \\
&M_{k-1}\gr_{m(k)}^W\cV(J)=W(J)_{l(k)-1}\gr_{m(k)}^W\cV(J),
\end{split}
\end{equation}
which implies
\begin{equation}
\label{eq:23}
\gr_k^M\gr_{m(k)}^W\cV(J)=\gr_{l(k)}^{W(J)}\gr_{m(k)}^W\cV(J)
\end{equation}
for any $k$.
By Lemma \ref{lem:4} and Lemma \ref{lem:2},
we have the isomorphism
\begin{equation}
\label{eq:20}
\gr_k^M\cV(J)
\simeq
\gr_k^M\gr_{m(k)}^W\cV(J)
\end{equation}
under which the filtration $F$ on the both sides coincide.
Then the data
$\nabla_k, \bV_k, S_k$
on $\gr_k^M\cV(J)$
can be defined from the ones
$\nabla_{l(k)}, \bV_{l(k)}, S_{l(k)}$
on $\gr_{l(k)}^{W(J)}\gr_{m(k)}^W\cV(J)$
via \eqref{eq:23} and \eqref{eq:20}.
It is trivial that
the data
\begin{equation}
(M, \{\nabla_k\}, \{\bV_k\}, \{S_k\})
\end{equation}
is a structure of filtered polarized variation of $\bR$-Hodge structure
on $(\cV(J), F)$.
Thus we conclude the following:
\end{para}

\begin{lem}
Let $(X,D)$ be a log pair,
and $(\cV, F)$ an object of $\fpvhs(X,D)_{\bR}$.
For any $J \subset I$,
the restriction $(\cO_{D(J)} \otimes \cV, F)$
admits a structure of filtered polarized variation of $\bR$-Hodge structure.
In other words,
$(\cO_{D(J)} \otimes \cV, F)$ is an object
of $\fpvhs(D(J), D(J) \cap D_{I \setminus J})_{\bR}$.
\end{lem}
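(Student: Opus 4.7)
The plan is to reduce to the pure case, which is already handled by Lemma~\ref{lem:7}, and then to glue the resulting structures on the various graded pieces $\gr_m^W\cV$ into a single structure on $\cV(J)$ via an appropriate refinement of the weight filtration $W$.

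First, I fix a structure $(W, \{\nabla_m\}, \{\bV_m\}, \{S_m\})$ on $(\cV, F)$. For every $m$, the pure object $(\gr_m^W\cV, F)$ lies in $\fpvhs(X,D)_{\bR}$, and Lemma~\ref{lem:7} produces a structure $(W(J), \{\nabla_l(J)\}, \{\bV_l(J)\}, \{S_l(J)\})$ on $(\cO_{D(J)} \otimes \gr_m^W\cV, F)$. Local freeness of $\gr_F^p\gr_m^W\cV$ makes the canonical morphism $\cO_{D(J)} \otimes \gr_m^W\cV \longrightarrow \gr_m^W\cV(J)$ into an isomorphism respecting $F$, so this structure can be transported to $(\gr_m^W\cV(J), F)$.

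Second, I assemble the family $\{W(J) \text{ on } \gr_m^W\cV(J)\}_m$ into a single filtration on $\cV(J)$. Using the projection $\pi_m : W_m\cV(J) \longrightarrow \gr_m^W\cV(J)$, the inverse images $\pi_m^{-1}(W(J)_l(\gr_m^W\cV(J)))$ interpolate between $W_{m-1}\cV(J)$ and $W_m\cV(J)$; ordering them yields a refinement $(M, \varphi)$ of $W$ on $\cV(J)$ together with integers $l(k)$ such that
\begin{equation}
\gr_k^M \gr_{m(k)}^W \cV(J) = \gr_{l(k)}^{W(J)} \gr_{m(k)}^W \cV(J).
\end{equation}
By Lemmas~\ref{lem:4} and \ref{lem:2}, the canonical map
\begin{equation}
\gr_k^M \cV(J) \longrightarrow \gr_k^M \gr_{m(k)}^W \cV(J)
\end{equation}
is an isomorphism identifying the filtration $F$ on both sides, so $\gr_F^p\gr_k^M\cV(J)$ is locally free of finite rank. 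Transporting $(\nabla_{l(k)}(J), \bV_{l(k)}(J), S_{l(k)}(J))$ along these isomorphisms gives data $(\nabla_k, \bV_k, S_k)$ on $\gr_k^M\cV(J)$, and since the transferred data underlies a polarized variation of $\bR$-Hodge structures on the open stratum $D(J) \setminus D(J) \cap D_{I \setminus J}$, the tuple $(M, \{\nabla_k\}, \{\bV_k\}, \{S_k\})$ is a structure of filtered polarized variation of $\bR$-Hodge structures on $(\cV(J), F)$.

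The main obstacle is the second step: constructing the refinement $M$ with the correct interleaving of the $W$-filtration and the various $W(J)$-filtrations, and ensuring that the compatibility results of Section~\ref{sec:preliminaries} transfer the filtration $F$ without spoiling local freeness of the bigraded pieces. This is precisely the difficulty that motivated the development of the refinement formalism earlier in the paper, and once that machinery is in hand the rest is essentially bookkeeping.
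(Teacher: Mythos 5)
Your proposal follows essentially the same route as the paper: reduce to the pure case via Lemma \ref{lem:7}, transport the resulting data to $\gr_m^W\cV(J)$ through the canonical isomorphism (which holds by local freeness of $\gr_F^p\gr_m^W\cV$, i.e.\ Lemma \ref{lem:10}), build the refinement $M$ of $W$ on $\cV(J)$ from the inverse images $\pi_{m}^{-1}(W(J)_l)$, and invoke Lemmas \ref{lem:4} and \ref{lem:2} to identify $\gr_k^M\cV(J)$ with $\gr_{l(k)}^{W(J)}\gr_{m(k)}^W\cV(J)$ compatibly with $F$. This matches the construction carried out in \ref{para:5}, so the argument is correct.
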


\begin{defn}
The restriction of the functor \eqref{eq:4}
to the full subcategory $\fpvhs(X,D)_{\bR}$ of $\fvb(X)$
gives us a functor
\begin{equation}
\fpvhs(X, D)_{\bR}
\longrightarrow
\fpvhs(D(J), D(J) \cap D_{I \setminus J})_{\bR}
\end{equation}
for any $J \subset I$.
This functor is denoted by $\Phi_{D(J)}$.
\end{defn}

\begin{rmk}
Let $(\cV, F)$ be an object of
$\fpvhs(X,D)_{\bR}$.
Since $\gr_F^p\cV$ is locally free of finite rank for all $p$
as mentioned in Remark \ref{rmk:1},
the canonical morphism
\begin{equation}
\label{eq:29}
\cO_{D(J)} \otimes \gr_F^p\cV
\longrightarrow
\gr_F^p\cV(J)
\end{equation}
is an isomorphism for all $p$.
\end{rmk}

\begin{para}
In the remainder of this section,
we study the Higgs field of $\Phi_{D(J)}(\cV, F)$
for an object $(\cV,F)$ of $\fpvhs(X,D)_{\bR}$
for the later use.

Let $(X, D)$ be a log pair.
Here we recall that the canonical morphism
$\Omega^1_X(\log D_{I \setminus J})
\longrightarrow \omega^1_X=\Omega^1_X(\log D)$
induces the morphism
\begin{equation}
\label{eq:33}
\omega^1_{D(J)}
\longrightarrow
\omega^1_X \otimes \cO_{D(J)},
\end{equation}
which fits in the exact sequence
\begin{equation}
\label{eq:40}
\begin{CD}
0 @>>> \omega_{D(J)}^1
  @>>> \omega_X^1 \otimes \cO_{D(J)}
  @>>> \cO_{D(J)}^{|J|}
  @>>> 0
\end{CD}
\end{equation}
as in \cite[5.14]{Fujino-Fujisawa}.
\end{para}

\begin{para}
Let $(\cV,F)$ be a pure object of $\fpvhs(X,D)_{\bR}$
and $(\nabla, \bV, S)$ the data on $(\cV,F)$ as in \ref{para:3}.
Then the structure of filtered polarized variation of $\bR$-Hodge structure
\begin{equation}
(W(J), \{\nabla_k(J)\}, \{\bV_k(J)\}, \{S_k(J)\}),
\end{equation}
on $(\cV(J), F)=\Phi_{D(J)}(\cV,F)$
is constructed in \ref{para:3} and \ref{para:4}.
The Higgs field
\begin{equation}
\gr_F^p\gr_k^{W(J)}\cV(J)
\longrightarrow
\omega^1_{D(J)} \otimes \gr_F^{p-1}\gr_k^{W(J)}\cV(J)
\end{equation}
associated to $\nabla_k(J)$
is denoted by $\theta(J)_{k,p}$ for all $k,p$.

On the other hand,
the Higgs field $\theta_p$ associated to $\nabla$ on $\cV$
induces the morphism
\begin{equation}
\id \otimes \theta_p:
\cO_{D(J)} \otimes \gr_F^p\cV
\longrightarrow
\omega^1_X \otimes (\cO_{D(J)} \otimes \gr_F^{p-1}\cV)
\end{equation}
by the identification
$\cO_{D(J)} \otimes \omega^1_X \otimes \gr_F^{p-1}\cV \simeq
\omega^1_X \otimes (\cO_{D(J)} \otimes \gr_F^{p-1}\cV)$.
Via the canonical isomorphism
\eqref{eq:29},
we obtain a morphism of $\cO_{D(J)}$-modules
\begin{equation}
\label{eq:34}
\gr_F^p\cV(J)
\longrightarrow
\omega_X^1 \otimes \gr_F^{p-1}\cV(J)
\end{equation}
for all $p$.
\end{para}

\begin{lem}
\label{lem:8}
In the situation above,
the morphism \eqref{eq:34}
preserves the filtration $W(J)$ on the both sides.
Therefore it induces the morphism
\begin{equation}
\gr_k^{W(J)}\gr_F^p\cV(J)
\longrightarrow
\omega^1_X \otimes \gr_k^{W(J)}\gr_F^{p-1}\cV(J),
\end{equation}
which fits in the commutative diagram
\begin{equation}
\xymatrix{
\gr_F^p\gr_k^{W(J)}\cV(J)
\ar[r]
\ar[d]^{\theta(J)_{k,p}}
&
\gr_k^{W(J)}\gr_F^p\cV(J)
\ar[dd] \\
\omega^1_{D(J)} \otimes \gr_F^{p-1}\gr_k^{W(J)}\cV(J) \ar[d] & \\
\omega^1_X \otimes \gr_F^{p-1}\gr_k^{W(J)}\cV(J)
\ar[r]
& \omega^1_X \otimes \gr_k^{W(J)}\gr_F^{p-1}\cV(J)
}
\end{equation}
where the two horizontal arrows
are the canonical isomorphism
for switching the filtrations $F$ and $W(J)$
and the bottom left vertical arrow
is induced by the canonical inclusion
\eqref{eq:33}.
\end{lem}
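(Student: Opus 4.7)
The assertion is local on $D(J)$, so the first step is to reduce to the local model of \ref{para:2}: after shrinking, $\cV = \cO_X \otimes V$ with $\nabla$ given by the formula in \eqref{eq:2}. In this trivialization one has $W(J)\cV(J) = \cO_{D(J)} \otimes W(J)V$, where $W(J)V$ is the monodromy weight filtration of $N_J = \sum_{j \in J}N_j$ acting on $V$.

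For the first assertion, the plan is to observe that every $N_i$ commutes with $N_J$, and hence preserves $W(J)V$: the filtrations $\{\kernel N_J^{a}\}$ and $\{\image N_J^{b}\}$ are preserved by any endomorphism commuting with $N_J$, and they determine $W(J)V$. Consequently the $\cO_X$-linear part $-(2\pi\sqrt{-1})^{-1}\sum_i (dt_i/t_i) \otimes N_i$ of $\nabla$ preserves the local extension $\cO_X \otimes W(J)V \subset \cV$; the exterior derivative $d$ preserves it trivially in the chosen trivialization. Therefore $\nabla$ itself preserves this local extension, and hence so does the Higgs field $\theta_p$ (which is $\nabla$ modulo $F^p$ in the target). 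Tensoring with $\cO_{D(J)}$ and invoking the isomorphism \eqref{eq:29}, one concludes that \eqref{eq:34} preserves $W(J)$ on both sides.

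For the commutative diagram, I would unpack the construction of $\nabla_k(J)$ from \ref{para:3}. Tensoring $\nabla$ with $\cO_{D(J)}$ gives a $\bC$-linear morphism $\cV(J) \to (\omega_X^1 \otimes \cO_{D(J)}) \otimes_{\cO_{D(J)}} \cV(J)$ whose composition with the projection of \eqref{eq:40} onto $\cO_{D(J)}^{|J|} \otimes \cV(J)$ is the sum of residues $\sum_{i \in J}\res_{D_i}(\nabla)|_{D(J)}$. By Lemma \ref{lem:1} each such residue shifts $W(J)$ by $-2$, so it acts as zero on $\gr_k^{W(J)}\cV(J)$. Consequently the restriction of $\id \otimes \nabla$ to $\gr_k^{W(J)}\cV(J)$ factors uniquely through the inclusion $\omega_{D(J)}^1 \otimes \gr_k^{W(J)}\cV(J) \hookrightarrow \omega_X^1 \otimes \gr_k^{W(J)}\cV(J)$ coming from \eqref{eq:33}, and the resulting lift is precisely $\nabla_k(J)$.

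Passing to the associated Higgs field (that is, taking $\gr_F^p$ of the source and quotienting the target by $F^p$) then identifies $\theta(J)_{k,p}$ composed with the inclusion from \eqref{eq:33} with the morphism induced by \eqref{eq:34} on $\gr_k^{W(J)}$, modulo the canonical isomorphisms of Lemma \ref{lem:5} switching $\gr_F$ and $\gr^{W(J)}$ at the four corners of the diagram. This yields the desired commutativity. The one delicate point is the bookkeeping of these four switching isomorphisms; the cleanest way to close it is to note that both composite arrows of the diagram arise from the single morphism $F^p\cV \cap (\cO_X \otimes W(J)V) \to \omega_X^1 \otimes (F^{p-1}\cV \cap (\cO_X \otimes W(J)V))$ induced by $\nabla$ in the local trivialization, which forces the two paths to agree.
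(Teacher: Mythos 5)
Your proposal is correct and follows essentially the same route as the paper's (much terser) proof: reduce to the local model of \ref{para:2}, observe that $\nabla$ induces a $\bC$-morphism on $\cO_{D(J)}\otimes\cV$ preserving $W(J)$ because each $N_i$ commutes with $N_J$, and then obtain the diagram by unwinding the definition of $\nabla_k(J)$ via the residue sequence \eqref{eq:40} and Lemma \ref{lem:1}. Your extra care in tracking the four switching isomorphisms through a common space of representatives is exactly the mechanism of Corollary \ref{cor:1}, so nothing is missing.
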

\begin{proof}
The connection $\nabla$ induces a $\bC$-morphism
\begin{equation}
\cO_{D(J)} \otimes \cV
\longrightarrow
\omega^1_X \otimes \cO_{D(J)} \otimes \cV.
\end{equation}
By the local description in \ref{para:2},
this morphism preserves the filtration $W(J)$.
Thus we obtain the conclusion
by the definition of $\nabla_k(J)$
in \cite[5.14]{Fujino-Fujisawa}.
\end{proof}

\begin{para}
Let $(\cV,F)$ be an object of $\fpvhs(X,D)_{\bR}$
which is not necessarily pure.
We fix
a structure of filtered polarized variation of $\bR$-Hodge structure
\begin{equation}
(W, \{\nabla_m\}, \{\bV_m\}, \{S_m\})
\end{equation}
on $(\cV,F)$.
Then we have
the structure of filtered polarized variation of $\bR$-Hodge structure
\begin{equation}
(M, \{\nabla_k\}, \{\bV_k\}, \{S_k\})
\end{equation}
on $(\cV(J), F)=\Phi_{D(J)}(\cV,F)$
constructed in \ref{para:5}.
The Higgs field
\begin{equation}
\gr_F^p\gr_k^M\cV(J)
\longrightarrow
\omega^1_{D(J)} \otimes \gr_F^{p-1}\gr_k^M\cV(J)
\end{equation}
associated to $\nabla_k$
is denoted by $\theta_{k,p}$ for all $k,p$.
On the other hand,
the Higgs field
associated to $\nabla_m$ on $\gr_m^W\cV$
induces a morphism
\begin{equation}
\label{eq:37}
\gr_F^p\gr_m^W\cV(J)
\longrightarrow
\omega^1_X \otimes \gr_F^{p-1}\gr_m^W\cV(J)
\end{equation}
as before via the canonical isomorphism
\begin{equation}
\cO_{D(J)} \otimes \gr_F^p\gr_m^W\cV
\overset{\simeq}{\longrightarrow}
\gr_F^p\gr_m^W\cV(J)
\end{equation}
for all $m, p$.
\end{para}

\begin{lem}
\label{lem:9}
In the situation above,
the morphism
\eqref{eq:37}
preserves the filtration $M$.
Therefore it induces the morphism
\begin{equation}
\label{eq:38}
\gr_k^M\gr_F^p\gr_{m(k)}^W\cV(J)
\longrightarrow
\omega^1_X
\otimes
\gr_k^M\gr_F^{p-1}\gr_{m(k)}^W\cV(J)
\end{equation}
which fits in the commutative diagram
\begin{equation}
\xymatrix{
\gr_F^p\gr_k^M\cV(J) \ar[d]_{\theta_{k,p}} \ar[r]^-{(4)}
&
\gr_F^p\gr_k^M\gr_{m(k)}^W\cV(J) \ar[d]^{(5)} \\
\omega^1_{D(J)} \otimes \gr_F^{p-1}\gr_k^M\cV(J) \ar[d]_{(1)}
&
\gr_k^M\gr_F^p\gr_{m(k)}^W\cV(J) \ar[d]^{(6)} \\
\omega^1_X \otimes \gr_F^{p-1}\gr_k^M\cV(J) \ar[d]_{(2)}
&
\omega^1_X
\otimes
\gr_k^M\gr_F^{p-1}\gr_{m(k)}^W\cV(J) \ar[d]^{(7)} \\
\omega^1_X \otimes \gr_k^M\gr_F^{p-1}\cV(J) \ar[r]_-{(3)}
&
\omega^1_X
\otimes
\gr_k^M\gr_{m(k)}^W\gr_F^{p-1}\cV(J)
},
\end{equation}
where $(6)$ is the morphism \eqref{eq:38},
$(1)$ is induced by the morphism \eqref{eq:33},
$(2), (5)$ are induced by switching the filtrations $F$ and $M$,
$(3)$ and $(4)$ are induced by the morphism
\eqref{eq:11} for $\gr_F^{p-1}\cV(J)$ and $\cV(J)$,
and $(7)$ is induced from the isomorphism \eqref{eq:16} respectively.
\end{lem}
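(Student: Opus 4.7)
The plan is to reduce both assertions---preservation of $M$ by the morphism \eqref{eq:37} and commutativity of the big diagram---to Lemma \ref{lem:8} applied to the pure object $(\gr_{m(k)}^W\cV, F)$ of $\fpvhs(X,D)_{\bR}$, combined with the compatibility of canonical switching isomorphisms provided by Lemma \ref{lem:5} and Corollary \ref{cor:1}.

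First I would verify that \eqref{eq:37} preserves $M$. By the construction in \ref{para:5}, the filtration $M$ restricted to $\gr_{m(k)}^W\cV(J)$ coincides, up to the relabeling $k \mapsto l(k)$, with the filtration $W(J)$ arising from applying the pure-case construction of \ref{para:3} to $\gr_{m(k)}^W\cV$; this is exactly the content of \eqref{eq:23}. Since \eqref{eq:37} is, by construction, the $\cO_{D(J)}$-extension of the Higgs field of $\nabla_m$ on $\gr_m^W\cV$, Lemma \ref{lem:8} applied to the pure object $(\gr_{m(k)}^W\cV, F)$ tells us that \eqref{eq:37} preserves $W(J)$, and hence also preserves $M$. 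This immediately produces the induced morphism \eqref{eq:38}.

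Next I would establish commutativity by splicing together several pieces. In \ref{para:5} the connection $\nabla_k$ on $\gr_k^M\cV(J)$ was defined by transporting $\nabla_{l(k)}(J)$ through the isomorphism \eqref{eq:20}; this isomorphism is precisely the arrow (4) in the diagram, and its analog for $\gr_F^{p-1}\cV(J)$ is (3). Consequently $\theta_{k,p}$ corresponds under (4) and (3) to the Higgs field $\theta(J)_{l(k),p}$ appearing in Lemma \ref{lem:8}'s diagram for $(\gr_{m(k)}^W\cV, F)$. The outer left path---$\theta_{k,p}$ followed by (1) and (2)---translates, under this correspondence, into the left edge of Lemma \ref{lem:8}'s diagram applied to the pure object, while the outer right path via (5), (6), (7) translates into the right edge of that same diagram.

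The main obstacle I anticipate is the bookkeeping of canonical switching isomorphisms: one must verify that (2) and (7) are compatible under the identifications (3), (4) and (5). This is precisely what Corollary \ref{cor:1} asserts, applied with the pair of filtrations $(F, M)$ on $\cV(J)$ and on $\gr_F^{p-1}\cV(J)$, together with the induced filtration $W$. Once this compatibility is established, the big diagram decomposes into Lemma \ref{lem:8}'s pure-case diagram (on the right) and two commutative squares of canonical switching isomorphisms (on the left and the bottom), yielding the desired commutativity.
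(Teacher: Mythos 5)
Your proposal is correct and follows essentially the same route as the paper: the paper's proof likewise reduces everything to the identification $M_k\gr_{m(k)}^W\cV(J)=W(J)_{l(k)}\gr_{m(k)}^W\cV(J)$ from \eqref{eq:23}, Lemma \ref{lem:8} applied to the pure object $\gr_{m(k)}^W\cV$, and Corollary \ref{cor:1} for the compatibility of the switching isomorphisms. Your write-up just makes explicit the splicing that the paper leaves implicit.
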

\begin{proof}
Because of
$M_k\gr_{m(k)}^W\cV(J)=W(J)_{l(k)}\gr_{m(k)}^W\cV(J)$,
Lemma \ref{lem:8} for $\gr_{m(k)}^W\cV$
and Corollary \ref{cor:1}
imply the conclusion.
\end{proof}

\section{Semipositivity theorem}
\label{sec:semip-theor}

First, we recall the definition of
semipositive locally free sheaves.

\begin{defn}
A locally free sheaf $\cE$ of finite rank
on a complete algebraic variety $X$
is said to be semipositive
if $\cO_{\bP_X(\cE)}(1)$ is nef on 
$\bP_{X}(\cE)$. 
\end{defn}

By using the functor $\Phi_{D(J)}$
in Section \ref{sec:restriction-functor},
the semipositivity theorem of Fujita--Zucker--Kawamata
can be generalized
as follows.

\begin{thm}
\label{thm:2}
Let $(X, D)$ be a log pair with $X$ being complete,
$(\cV, F)$ an object of $\fpvhs(X,D)_{\bR}$
and $A$ a locally free $\cO_X$-module of finite rank
equipped with the surjection $\pi: \gr_F\cV \longrightarrow A$.
Assume that there exists
a structure of filtered polarized variation of $\bR$-Hodge structure
$(W, \{\nabla_m\}, \{\bV_m\}, \{S_m\})$
such that
the composite
\begin{equation}
\begin{CD}
\gr_F\gr^W\cV
@>>>
\omega_X^1 \otimes \gr_F\gr^W\cV \\
@>{\simeq}>>
\omega_X^1 \otimes \gr^W\gr_F\cV \\
@>{\id \otimes \gr^W\pi}>>
\omega_X^1 \otimes \gr^WA
\end{CD}
\end{equation}
is the zero morphism,
where the filtration $W$ on $A$ is induced
from the filtration $W$ on $\gr_F\cV$
and where the first arrow is the associated Higgs field,
and the second arrow is the canonical isomorphism
induced by switching the filtrations $W$ and $F$.
Then $A$ is semipositive.
\end{thm}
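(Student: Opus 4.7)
The plan is to prove this by induction on $\dim X$, the case $\dim X = 0$ being trivial. Fix such a structure $(W, \{\nabla_m\}, \{\bV_m\}, \{S_m\})$ on $(\cV, F)$ witnessing the hypothesis. To show $A$ is semipositive, by the usual curve criterion for nefness it suffices to establish that $f^{\ast}A$ is a nef bundle for every morphism $f\colon C \to X$ from a smooth complete curve $C$; two cases arise according to whether or not $f(C) \subset D$.

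\textbf{Reduction via the restriction functor.} Suppose $f(C) \subset D$. Then $f(C) \subset D(J)$ for some nonempty $J \subset I$, and $f$ factors through $\iota\colon D(J) \hookrightarrow X$. Applying $\Phi_{D(J)}$ yields $(\cV(J), F) \in \fpvhs(D(J), D(J) \cap D_{I \setminus J})_{\bR}$ equipped with the structure $(M, \{\nabla_k\}, \{\bV_k\}, \{S_k\})$ of \ref{para:5} and the induced surjection $\gr_F \cV(J) \to \cO_{D(J)} \otimes A$ (using \eqref{eq:29}). The essential point is that the hypothesis survives restriction: by Lemma \ref{lem:9} and Corollary \ref{cor:1}, post-composing the corresponding Higgs composite for $\Phi_{D(J)}(\cV, F)$ with the canonical inclusion $\omega_{D(J)}^1 \hookrightarrow \omega_X^1 \otimes \cO_{D(J)}$ from \eqref{eq:33} reproduces the restriction to $D(J)$ of the original vanishing composite for $(\cV, F)$. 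Since \eqref{eq:33} is injective, this forces the composite for $\Phi_{D(J)}(\cV, F)$ to vanish as well. Now $\dim D(J) < \dim X$, so by the inductive hypothesis $\cO_{D(J)} \otimes A$ is semipositive on $D(J)$, whence $f^{\ast}A = g^{\ast}(\cO_{D(J)} \otimes A)$ is nef on $C$, where $g\colon C \to D(J)$ is the factorization.

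\textbf{Reduction to a curve via pullback.} Now suppose $f(C) \not\subset D$. Then $(C, f^{-1}(D))$ is a log pair (every reduced divisor on a smooth curve is automatically SNC), $f$ is a morphism of log pairs, and applying the pullback functor for $\fpvhs$ yields $(f^{\ast}\cV, F) \in \fpvhs(C, f^{-1}(D))_{\bR}$ together with the surjection $f^{\ast}\pi\colon \gr_F f^{\ast}\cV \to f^{\ast}A$. Because the Higgs field of the pullback factors through $f^{\ast}\omega_X^1 \to \omega_C^1$, the vanishing hypothesis is preserved, so we may assume $X = C$ is itself a smooth complete curve. On such a $C$ the Higgs field is block-diagonal with respect to the weight filtration $W$, so the hypothesis splits into a separate condition for each $m$: the composite $\gr_F \gr_m^W \cV \to \omega_C^1 \otimes \gr_F \gr_m^W \cV \to \omega_C^1 \otimes \gr_m^W A$ vanishes (passing to the torsion-free quotient of $\gr_m^W A$ if needed, which only weakens the cokernel condition). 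For each $m$, $(\gr_m^W \cV, F)$ is a pure polarized variation of $\bR$-Hodge structures with unipotent local monodromies, and this vanishing is precisely the hypothesis of the classical Fujita--Zucker--Kawamata semipositivity theorem (cf. \cite{FujitaKFSC}, \cite{ZuckerRTF}, \cite{KawamataCAV}), yielding the nefness of each $\gr_m^W A$. Since on a smooth curve an iterated extension of nef locally free sheaves is again nef, $A$ itself is nef.

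\textbf{Main obstacle.} The crux of the argument is the inductive step, namely the preservation of the vanishing hypothesis under $\Phi_{D(J)}$. This is exactly what is secured by Lemma \ref{lem:9} together with Corollary \ref{cor:1}, which relate the Higgs field on $\cV(J)$ computed via the refined filtration $M$ to the original Higgs field on $\cV$. It is precisely here that taking $A$ as a quotient of $\gr_F \cV$ (rather than a subobject, as in \cite{Brunebarbe}) is essential, since the surjection $\gr^W \gr_F \cV \to \gr^W A$ transports naturally along all the identifications of Section \ref{sec:restriction-functor}, whereas the analogous property for a subobject fails, as explained in the introduction.
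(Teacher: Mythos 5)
Your proposal is correct and follows essentially the same route as the paper: induction on $\dim X$ via the curve criterion, using the pullback functor together with Lemma \ref{lem:11} when the test curve is not contained in $D$, and the restriction functor $\Phi_{D(J)}$ together with Lemma \ref{lem:12} (whose content is exactly your ``preservation of the vanishing hypothesis'', resting on Lemma \ref{lem:9} and Corollary \ref{cor:1}) when it is. The one small inaccuracy is in the one-dimensional case: the vanishing of $\gr_F\gr_m^W\cV \to \omega^1_C \otimes \gr_m^W A$ is not the hypothesis of the classical Fujita--Zucker--Kawamata theorem but of its generalization to locally free quotients killed by the Higgs field; the paper obtains this by dualizing via \eqref{eq:27}, so that $(\gr_m^W A)^{\ast}$ lies in the kernel of $\theta^{\ast}$, and then invoking the argument of Brunebarbe's Lemma 4.7, which is the statement your weight-by-weight reduction actually requires.
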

\begin{proof}
We fix
a structure of filtered polarized variation of $\bR$-Hodge structure
\begin{equation}
(W, \{\nabla_m\}, \{\bV_m\}, \{S_m\})
\end{equation}
satisfying the assumption.

For the case of $\dim X=1$,
we can easily reduce the problem to the pure case.
Then the equality \eqref{eq:27} implies that
$A^{\ast} \subset (\gr_F\cV)^{\ast} \simeq \gr_F\cV^{\ast}$
is contained in the kernel of the Higgs field
$\theta^{\ast}$ of $(\cV^{\ast},F)$.
Therefore we can obtain the conclusion
for $X$ by an argument similar to
the proof of Lemma 4.7 in \cite{Brunebarbe}.
Then we complete the proof by Lemma \ref{lem:11} below
and by the inductive argument
as in
\cite{KawamataCAV},
\cite{Fujino-Fujisawa},
\cite{Brunebarbe}
using the functor $\Phi_{D(J)}$,
together with Lemma \ref{lem:12} below.
\end{proof}

\begin{lem}
\label{lem:11}
In addition to the situation above,
let $(Y,E)$ be a log pair and
$f: (Y,E) \longrightarrow (X,D)$ a  morphism of log pairs.
We have the surjective morphism
\begin{equation}
\label{eq:41}
\gr_Ff^{\ast}\cV \longrightarrow f^{\ast}A
\end{equation}
induced by the canonical isomorphism \eqref{eq:36}.
We fix
the structure of filtered polarized variation of $\bR$-Hodge structure
\eqref{eq:31}.
The filtration $W$ on $A$
is induced from the filtration $W$ on $f^{\ast}\cV$
via the surjection \eqref{eq:41}.
Then the composite
\begin{equation}
\begin{split}
\gr_F\gr^Wf^{\ast}\cV
&\longrightarrow
\omega^1_Y \otimes \gr_F\gr^Wf^{\ast}\cV \\
&\longrightarrow
\omega^1_Y \otimes \gr^W\gr_Ff^{\ast}\cV \\
&\longrightarrow
\omega^1_Y \otimes \gr^Wf^{\ast}A
\end{split}
\end{equation}
is the zero morphism,
where the top arrow is the Higgs field associated to
\eqref{eq:31},
the second is the canonical isomorphism
for switching the filtration $F$ and $W$
and the third is induced from the surjection \eqref{eq:41}.
\end{lem}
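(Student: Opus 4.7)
The plan is to reduce the statement to the hypothesis of Theorem \ref{thm:2} by using the naturality of the Higgs field and of all the graded constructions under the pullback functor $f^{\ast}$. Concretely, the idea is that the composite on $Y$ appearing in the statement is, up to canonical identifications, the pullback of the composite on $X$ that is assumed to be the zero morphism, precomposed (or rather post-composed on the $\omega^1$-factor) with the canonical map $f^{\ast}\omega^1_X \to \omega^1_Y$; hence it vanishes.

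First I would check the compatibility of the weight filtrations on the two sides. Recall that $W$ on $f^{\ast}\cV$ is defined as the image filtration of $f^{\ast}W_m\cV$, and that the canonical morphism \eqref{eq:36} is an isomorphism identifying the filtrations $W$ on $f^{\ast}\gr_F\cV$ and on $\gr_Ff^{\ast}\cV$. Consequently the surjection \eqref{eq:41} factors as the composite $\gr_Ff^{\ast}\cV \xleftarrow{\simeq} f^{\ast}\gr_F\cV \xrightarrow{f^{\ast}\pi} f^{\ast}A$, so the filtration $W$ on $f^{\ast}A$ obtained via \eqref{eq:41} coincides with the pullback of the filtration $W$ on $A$. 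Using the isomorphisms \eqref{eq:32} (and their analogues after applying $\gr^W$ or $\gr_F$), the composite in the statement becomes, up to these canonical identifications, the morphism
\begin{equation}
f^{\ast}\gr_F\gr^W\cV
\longrightarrow
\omega^1_Y \otimes f^{\ast}\gr_F\gr^W\cV
\longrightarrow
\omega^1_Y \otimes f^{\ast}\gr^W\gr_F\cV
\longrightarrow
\omega^1_Y \otimes f^{\ast}\gr^WA.
\end{equation}

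Next I would invoke the explicit description of the Higgs field on the pullback given earlier in the paper: it is the composite
\begin{equation}
f^{\ast}\gr_F\gr^W\cV
\xrightarrow{f^{\ast}\theta}
f^{\ast}\omega^1_X \otimes f^{\ast}\gr_F\gr^W\cV
\longrightarrow
\omega^1_Y \otimes f^{\ast}\gr_F\gr^W\cV,
\end{equation}
where $\theta$ is the Higgs field of $(\cV,F)$ and the last arrow is induced by the canonical morphism $f^{\ast}\omega^1_X \to \omega^1_Y$. Plugging this into the previous display, the composite we need to show is zero is the image under the functor $f^{\ast}(-)$, tensored with the canonical map $f^{\ast}\omega^1_X \to \omega^1_Y$, of the composite
\begin{equation}
\gr_F\gr^W\cV
\longrightarrow
\omega^1_X \otimes \gr_F\gr^W\cV
\xrightarrow{\simeq}
\omega^1_X \otimes \gr^W\gr_F\cV
\xrightarrow{\id \otimes \gr^W\pi}
\omega^1_X \otimes \gr^WA
\end{equation}
on $X$. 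By the assumption of Theorem \ref{thm:2}, this composite on $X$ is the zero morphism; applying $f^{\ast}$ and composing with $f^{\ast}\omega^1_X \otimes \id \to \omega^1_Y \otimes \id$ therefore yields the zero morphism on $Y$, which is the desired conclusion.

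The only subtle point, and the step I expect to demand the most care, is the bookkeeping of the several canonical identifications: the isomorphisms \eqref{eq:36} and \eqref{eq:32} exchanging $f^{\ast}$ with $\gr_F$ and $\gr^W$, the identification of $W$ on $f^{\ast}A$ with the pullback of $W$ on $A$, and the commutation of the ``switching'' isomorphisms for the filtrations $F$ and $W$ with the pullback functor. These commutations are formal consequences of Lemma \ref{lem:10} applied to the locally free graded pieces $\gr_F^p\gr_m^W\cV$ (whose pullbacks remain locally free) together with the functoriality of the constructions in Section \ref{sec:preliminaries}; once they are in place the lemma follows without further computation.
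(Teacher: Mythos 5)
Your proposal is correct and follows essentially the same route as the paper: you establish the compatibility of the weight filtration on $f^{\ast}A$ (induced via \eqref{eq:41}) with the pullback of $W$ on $A$, obtain the resulting commutative square relating $f^{\ast}\gr^W\gr_F\cV \to f^{\ast}\gr^WA$ with $\gr^W\gr_Ff^{\ast}\cV \to \gr^Wf^{\ast}A$, and then transfer the vanishing hypothesis of Theorem \ref{thm:2} through the explicit description of the pulled-back Higgs field. The paper's proof is just a terser version of the same argument, stopping at the commutative diagram and leaving the final step implicit.
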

\begin{proof}
By the commutative diagram
\begin{equation}
\begin{CD}
f^{\ast}W_m\gr_F\cV @>{\simeq}>>
W_mf^{\ast}\gr_F\cV @>{\simeq}>> W_m\gr_Ff^{\ast}\cV \\
@VVV @VVV @VVV \\
f^{\ast}W_mA @>>> f^{\ast}A @= f^{\ast}A
\end{CD}
\end{equation}
and by the fact that the left vertical arrow is surjective,
we have the canonical morphism
\begin{equation}
f^{\ast}W_mA \longrightarrow W_mf^{\ast}A
\end{equation}
for every $m$.
Here we note that
the filtration $W$ on $f^{\ast}A$ in the right hand side
is induced from the filtration $W$ on $\gr_Ff^{\ast}\cV$.
Then we have the commutative diagram
\begin{equation}
\begin{CD}
f^{\ast}\gr^W\gr_F\cV @>>> \gr^W\gr_Ff^{\ast}\cV \\
@VVV @VVV \\
f^{\ast}\gr^WA @>>> \gr^Wf^{\ast}A ,
\end{CD}
\end{equation}
from which we can easily obtain the conclusion.
\end{proof}

\begin{lem}
\label{lem:12}
In the situation above,
we fix
the structure of filtered polarized variation of $\bR$-Hodge structure
\begin{equation}
\label{eq:39}
(M, \{\nabla_k\},\{\bV_k\},S_k)
\end{equation}
on $(\cV(J),F)=\Phi_{D(J)}(\cV,F)$
constructed in {\rm \ref{para:5}}.
By using the canonical isomorphism \eqref{eq:29}
we obtain a surjective morphism
\begin{equation}
\label{eq:35}
\gr_F\cV(J)
\longrightarrow
\cO_{D(J)} \otimes A
\end{equation}
which fits in the commutative diagram
\begin{equation}
\begin{CD}
\cO_{D(J)} \otimes \gr_F\cV
@>>>
\gr_F\cV(J) \\
@VVV @VVV \\
\cO_{D(J)} \otimes A
@=
\cO_{D(J)} \otimes A
\end{CD}
\end{equation}
by definition.
A filtration $M$ on $\cO_{D(J)} \otimes A$
is induced from $M$ on $\gr_F\cV(J)$ via the surjection \eqref{eq:35}.
Then the composite
\begin{equation}
\begin{split}
\gr_F\gr^M\cV(J)
&\longrightarrow
\omega^1_{D(J)} \otimes \gr_F\gr^M\cV(J) \\
&\longrightarrow
\omega^1_{D(J)} \otimes \gr^M\gr_F\cV(J) \\
&\longrightarrow
\omega^1_{D(J)} \otimes \gr^M(\cO_{D(J)} \otimes A)
\end{split}
\end{equation}
is the zero morphism,
where the first arrow denotes the Higgs field
associated to \eqref{eq:39}
and the second arrow is the isomorphism
induced by switching the filtrations $M$ and $F$.
\end{lem}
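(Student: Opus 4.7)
The plan is to reduce the stated vanishing to the hypothesis of Theorem~\ref{thm:2} applied weight by weight to $(\gr_m^W\cV, F)$, by first passing from $\omega^1_{D(J)}$ to $\omega^1_X$ via the inclusion $\iota: \omega^1_{D(J)} \hookrightarrow \omega^1_X \otimes \cO_{D(J)}$ coming from the exact sequence~\eqref{eq:40}. The crucial preliminary observation is that the cokernel of~\eqref{eq:40} is the free $\cO_{D(J)}$-module $\cO_{D(J)}^{|J|}$, hence flat, so tensoring~\eqref{eq:40} with any $\cO_{D(J)}$-module $N$ preserves exactness on the left. Consequently $\iota \otimes \id_N$ is injective for every $N$, and it suffices to verify that the composite in the statement vanishes after post-composition with $\iota$.

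Next, I would apply Lemma~\ref{lem:9} componentwise to the Higgs field $\theta_{k,p}$ of $(\cV(J), F)$ with respect to the $M$-filtration. That lemma identifies $\iota \circ \theta_{k,p}$, after the $F,M$-switching isomorphism, with the morphism~\eqref{eq:38}, which is the $\gr_k^M$-piece of~\eqref{eq:37}. By the construction of \eqref{eq:39} in~\ref{para:5}, the morphism~\eqref{eq:37} is precisely the $\cO_{D(J)}$-tensor of the Higgs field $\theta_{m(k),p}$ associated to $\nabla_{m(k)}$ on $\gr_{m(k)}^W\cV$, viewed as a morphism into $\omega^1_X$. Post-composing the diagram of Lemma~\ref{lem:9} with the surjection $\gr^M \pi(J)$ and invoking Corollary~\ref{cor:1} to reconcile the filtration switches, together with the canonical isomorphisms~\eqref{eq:19} and~\eqref{eq:29} and the natural surjection $\cO_{D(J)} \otimes \gr_m^W A \to \gr_m^W(\cO_{D(J)} \otimes A)$ induced from the commutative square of surjections in the statement, the composite $\iota \circ \Phi_{k,p}$ (where $\Phi_{k,p}$ denotes the $(k,p)$-summand of the composite in the statement) is realised as the $\gr_k^M$ (with $m = m(k)$) of the $\cO_{D(J)}$-tensor of
\[
\gr_F^p\gr_m^W\cV \xrightarrow{\theta_{m,p}} \omega^1_X \otimes \gr_F^{p-1}\gr_m^W\cV \xrightarrow{\simeq} \omega^1_X \otimes \gr_m^W\gr_F^{p-1}\cV \xrightarrow{\id \otimes \gr^W\pi} \omega^1_X \otimes \gr_m^W A.
\]

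By the hypothesis of Theorem~\ref{thm:2}, this last composite is the zero morphism on $X$. Tensoring with $\cO_{D(J)}$ and then applying $\gr_k^M$ therefore gives zero, so $\iota \circ \Phi_{k,p} = 0$ for every $k, p$; the universal injectivity of $\iota$ then forces $\Phi_{k,p} = 0$, which yields the lemma.

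The main obstacle I anticipate is the diagram bookkeeping: the diagram of Lemma~\ref{lem:9} has to be enlarged on the right by the projection to $\gr_k^M(\cO_{D(J)} \otimes A)$, and one must verify that $\gr^W\pi$ is compatible both with the canonical isomorphisms~\eqref{eq:19}, \eqref{eq:29} arising from tensoring by $\cO_{D(J)}$, and with the passage from $\cO_{D(J)} \otimes \gr_m^W A$ to $\gr_m^W(\cO_{D(J)} \otimes A)$. These verifications are formal but require care because $\gr_m^W A$ need not be locally free; it is precisely the flatness of the cokernel of~\eqref{eq:40} which allows us to dispense with any local-freeness hypothesis on $\gr^W A$ or on $\gr^M(\cO_{D(J)} \otimes A)$.
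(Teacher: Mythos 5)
Your proposal is correct and follows essentially the same route as the paper's proof: reduce to vanishing after the injective map $\omega^1_{D(J)}\otimes\gr_k^M(\cO_{D(J)}\otimes A)\to\omega^1_X\otimes\gr_k^M(\cO_{D(J)}\otimes A)$ coming from the exact sequence \eqref{eq:40}, use Lemma \ref{lem:9} to rewrite $\theta_{k,p}$ as the $\gr_k^M$-piece of the $\cO_{D(J)}$-tensor of the Higgs field on $\gr_{m(k)}^W\cV$, and conclude from the hypothesis on $A$ in Theorem \ref{thm:2} via the compatibility square between $\cO_{D(J)}\otimes\gr_{m(k)}^WA$ and $\gr_{m(k)}^W(\cO_{D(J)}\otimes A)$. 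Your explicit remark that the freeness of the cokernel $\cO_{D(J)}^{|J|}$ is what guarantees the universal injectivity is a useful clarification of a step the paper leaves implicit, but it is not a different argument.
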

\begin{proof}
From the exact sequence
\eqref{eq:40},
the morphism
\begin{equation}
\omega^1_{D(J)} \otimes \gr_k^M(\cO_{D(J)} \otimes A)
\longrightarrow
\omega^1_X \otimes \gr_k^M(\cO_{D(J)} \otimes A)
\end{equation}
is injective.
Therefore it suffices to prove that
the composite
\begin{equation}
\begin{split}
\gr_F^p\gr_k^M\cV(J)
&\xrightarrow{\ \theta_{k,p}\ }
\omega^1_{D(J)} \otimes \gr_F^{p-1}\gr_k^M\cV(J) \\
&\xrightarrow{\ \phantom{\theta_{k,p}}\ }
\omega^1_{D(J)} \otimes \gr_k^M\gr_F^{p-1}\cV(J) \\
&\xrightarrow{\ \phantom{\theta_{k,p}}\ }
\omega^1_{D(J)} \otimes \gr_k^M(\cO_{D(J)} \otimes A) \\
&\xrightarrow{\ \phantom{\theta_{k,p}}\ }
\omega^1_X \otimes \gr_k^M(\cO_{D(J)} \otimes A)
\end{split}
\end{equation}
is the zero morphism for all $k,p$.
Then, by the commutative diagram
\begin{equation}
\begin{CD}
\omega^1_{D(J)} \otimes \gr_F^{p-1}\gr_k^M\cV(J)
@>>>
\omega^1_X \otimes \gr_F^{p-1}\gr_k^M\cV(J) \\
@V{\simeq}VV @VV{\simeq}V \\
\omega^1_{D(J)} \otimes \gr_k^M\gr_F^{p-1}\cV(J)
@>>>
\omega^1_X \otimes \gr_k^M\gr_F^{p-1}\cV(J) \\
@VVV @VVV \\
\omega^1_{D(J)} \otimes \gr_k^M(\cO_{D(J)} \otimes A)
@>>>
\omega^1_X \otimes \gr_k^M(\cO_{D(J)} \otimes A)
\end{CD}
\end{equation}
and by Lemma \ref{lem:9}
it suffices to prove that the composite
\begin{equation}
\begin{split}
\gr_k^M\gr_F^p\gr_{m(k)}^W\cV(J)
&\longrightarrow
\omega^1_X \otimes \gr_k^M\gr_F^{p-1}\gr_{m(k)}^W\cV(J) \\
&\overset{\simeq}{\longrightarrow}
\omega^1_X \otimes \gr_k^M\gr_{m(k)}^W\gr_F^{p-1}\cV(J) \\
&\longrightarrow
\omega^1_X \otimes \gr_k^M\gr_{m(k)}^W(\cO_{D(J)} \otimes A)
\end{split}
\end{equation}
is the zero morphism for all $k,p$,
where the first arrow
is the morphism \eqref{eq:38}
and the second is the one
induced by the isomorphism \eqref{eq:16}.
Here we remark that
the filtration $W$ on $\cO_{D(J)} \otimes A$
is defined as the filtration induced from $W$ on $\gr_F\cV(J)$
via the surjection \eqref{eq:35}.
Therefore we obtain the conclusion
from the commutative diagram
\begin{equation}
\begin{CD}
\cO_{D(J)} \otimes \gr_{m(k)}^W\gr_F^{p-1}\cV
@>>>
\gr_{m(k)}^W\gr_F^{p-1}\cV(J) \\
@VVV @VVV \\
\cO_{D(J)} \otimes \gr_{m(k)}^WA
@>>>
\gr_{m(k)}^W(\cO_{D(J)} \otimes A)
\end{CD}
\end{equation}
and the assumption for $A$.
\end{proof}

Now Theorem 1.8 of \cite{Brunebarbe}
is a corollary of Theorem \ref{thm:2}.

\begin{cor}
\label{cor:2}
Let $(X,D)$ be a log pair
and $(\cV,F)$ an object of $\fpvhs(X,D)_{\bR}$.
We assume that $(\cV,F)$ is pure,
that is, there exists a data $(\nabla,\bV,S)$ as in \ref{para:3}.
Let $A$ be a subbundle of $\gr_F\cV$
contained in the kernel of the associated Higgs field
$\theta: \gr_F\cV \longrightarrow \omega^1_X \otimes \gr_F\cV$.
Then the $\cO_X$-dual $A^{\ast}$ of $A$ is semipositive.
\end{cor}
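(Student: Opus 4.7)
The plan is to reduce Corollary \ref{cor:2} directly to Theorem \ref{thm:2}, applied not to $(\cV, F)$ itself but to the dual object $(\cV^{\ast}, F)$ together with the graded quotient $\pi^{\vee}: \gr_F\cV^{\ast} \to A^{\ast}$ dual to the given inclusion $A \hookrightarrow \gr_F\cV$.

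First I would observe that purity of $(\cV, F)$ transfers to the dual structure on $(\cV^{\ast}, F)$ constructed in the paragraph preceding Example \ref{exa:1}: if $W_{m_0-1}\cV = 0$ and $W_{m_0}\cV = \cV$, then by the definition of the dual weight filtration one has $W_{-m_0-1}\cV^{\ast} = 0$ and $W_{-m_0}\cV^{\ast} = \cV^{\ast}$. Consequently $\gr^W\cV^{\ast}$, $\gr^W\gr_F\cV^{\ast}$, and the induced $\gr^W A^{\ast}$ are concentrated in a single weight and coincide with $\cV^{\ast}$, $\gr_F\cV^{\ast}$, and $A^{\ast}$ respectively. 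Writing $A = \bigoplus_p A^p$ with $A^p \subset \gr_F^p\cV$ a subbundle, the canonical isomorphism $\gr_F^p\cV^{\ast} \simeq (\gr_F^{-p}\cV)^{\ast}$ of Lemma \ref{lem:6} identifies the direct sum of duals of the inclusions $A^{-p} \hookrightarrow \gr_F^{-p}\cV$ with the desired graded surjection $\pi^{\vee}$, with $A^{\ast}$ locally free of finite rank.

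The key point is to check that the hypothesis of Theorem \ref{thm:2} holds for the pair $((\cV^{\ast}, F), \pi^{\vee})$. For each local section $\chi$ of $\Theta_X(-\log D)$, the assumption that $A$ is contained in $\kernel\theta$ means that the composite
\begin{equation*}
A \hookrightarrow \gr_F\cV \xrightarrow{\theta(\chi)} \gr_F\cV
\end{equation*}
vanishes. Taking the $\cO_X$-dual of this composite and invoking the identity $\theta^{\ast}(\chi) = -(\theta(\chi))^{\ast}$ from \eqref{eq:27}, I conclude that
\begin{equation*}
\gr_F\cV^{\ast} \xrightarrow{\theta^{\ast}(\chi)} \gr_F\cV^{\ast} \xrightarrow{\pi^{\vee}} A^{\ast}
\end{equation*}
is zero for every $\chi$, equivalently, the composite
\begin{equation*}
\gr_F\cV^{\ast} \xrightarrow{\theta^{\ast}} \omega^1_X \otimes \gr_F\cV^{\ast} \xrightarrow{\id \otimes \pi^{\vee}} \omega^1_X \otimes A^{\ast}
\end{equation*}
vanishes. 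In view of the purity of $(\cV^{\ast}, F)$ noted above, the $\gr^W$ in the formulation of Theorem \ref{thm:2} is trivial, so this is precisely the hypothesis required there.

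Applying Theorem \ref{thm:2} to $(\cV^{\ast}, F)$ and the surjection $\pi^{\vee}$ then yields the semipositivity of $A^{\ast}$, as desired. I do not anticipate any serious obstacle: the argument is a formal duality calculation whose only subtlety is the bookkeeping of the index shifts in the dual filtration, which is handled uniformly by Lemma \ref{lem:6}, while the sign in \eqref{eq:27} is immaterial for the vanishing we need.
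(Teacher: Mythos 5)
Your proposal is correct and is essentially the paper's own argument: the paper's proof of Corollary \ref{cor:2} consists precisely of passing to the dual object $(\cV^{\ast},F)$, viewing $A^{\ast}$ as a quotient of $(\gr_F\cV)^{\ast}\simeq\gr_F\cV^{\ast}$, invoking the identity \eqref{eq:27} to verify the hypothesis of Theorem \ref{thm:2}, and applying that theorem. Your write-up merely spells out the purity bookkeeping and the sign, which the paper leaves implicit.
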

\begin{proof}
Because of the equality \eqref{eq:27},
the quotient bundle $A^{\ast}$
of $(\gr_F\cV)^{\ast} \simeq \gr_F\cV^{\ast}$
satisfies the assumption in Theorem \ref{thm:2}.
\end{proof}

In \cite[Theorem 4.5]{Brunebarbe},
a subbundle of $\gr_F\cV$ is considered
instead of a quotient bundle of $\gr_F\cV$
in Theorem \ref{thm:2}.
Apparently, it looks possible to obtain the ``semi-negativity''
for a certain kind of subbundles
by using the inductive argument
as in the proof of Theorem \ref{thm:2}.
However, the following example shows that
Theorem 4.5 in \cite{Brunebarbe} is false.

\begin{exa}
\label{exa:2}
Let $U$ be a Zariski open subset of $C=\bP^1$
and $V=(\bV, F)$ be
a polarizable variation of $\bR$-Hodge structure of weight $w$
on $U$ of unipotent monodromy,
where $F$ denotes the Hodge filtration
on $\cO_U \otimes \bV$.
The canonical extension of $\cO_U \otimes \bV$ to the whole $C$
is denoted by $\cV$.
By Schmid's theorem,
the Hodge filtration $F$ extends to $\cV$
such that $\gr_F^p\cV$ is a locally free $\cO_C$-module of finite rank
for all $p$.
Here we assume the following conditions:
\begin{itemize}
\item
For an integer $b$,
$F^{b+1}\cV=0$ and $F^b\cV \simeq \cO_C(n)$
for a positive integer $n$.
\end{itemize}
Now we set $X=\bP_C(\cO_C \oplus \cO_C(n))$.
The projection $X \longrightarrow C$
is denoted by $\pi: X \longrightarrow C$.
The minimal section is denoted by $C_0$,
that is $C_0$ is the section of $\pi$ with $C_0^2=-n$.
We denote by $C_{\infty}$ the section of $\pi$
with the property $C_{\infty}=n$.
Then $C_0 \cap C_{\infty}=\emptyset$ and
$\cO_X(C_{\infty}) \simeq \cO_X(C_0) \otimes \pi^{\ast}\cO_C(n)$.

The local system $\pi^{-1}\bV$
underlies a polarizable variation of Hodge structure
on $\pi^{-1}U \subset X$
such that its canonical extension is
$\pi^{\ast}\cV$ with the filtration
$F^p\pi^{\ast}\cV=\pi^{\ast}F^p\cV$ for all $p$.

Now we define a locally free $\cO_X$-module $\widetilde{\cV}$ of finite rank
equipped with an increasing filtration $W$
and a decreasing filtration $F$ by
\begin{equation}
\begin{split}
&\widetilde{\cV}=\pi^{\ast}\cV \oplus \cO_X \\
&W_{-1}\widetilde{\cV}=0, \quad
W_0\widetilde{\cV}=\pi^{\ast}\cV, \quad
W_1\widetilde{\cV}=\widetilde{\cV} \\
&F^p\widetilde{\cV}=F^p\pi^{\ast}\cV \oplus \cO_X \ (p \le b), \quad
F^{b+1}\widetilde{\cV}=0.
\end{split}
\end{equation}
Then
\begin{equation}
\begin{split}
&\gr_m^W\widetilde{\cV}=0 \quad \text{if $m \notin \{0, 1\}$} \\
&\gr_0^W\widetilde{\cV}=\pi^{\ast}\cV \\
&\gr_1^W\widetilde{\cV}=\cO_X
\end{split}
\end{equation}
by definition.
Thus $(\widetilde{\cV}, W, F)$ underlies an object
of $\gpfmhs(X,D)$, where $D=\pi^{-1}(C \setminus U)$.
Here we note
\begin{equation}
F^b\widetilde{\cV}
=F^b\pi^{\ast}\cV \oplus \cO_X
=\pi^{\ast}\cO_C(n) \oplus \cO_X
\simeq
\cO_X(C_{\infty}-C_0) \oplus \cO_X
\end{equation}
by the isomorphism
\begin{equation}
\cO_X(C_{\infty}-C_0) \simeq \pi^{\ast}\cO_C(n)
\end{equation}
above.
On the other hand,
we have the commutative diagram
\begin{equation}
\begin{CD}
0 @>>> \cO_X(-C_0)
  @>>> \cO_X
  @>>> \cO_{C_0} @>>> 0 \\
@. @VVV @VVV @| \\
0 @>>> \cO_X(C_{\infty}-C_0)
  @>>> \cO_X(C_{\infty})
  @>>> \cO_{C_0} @>>> 0
\end{CD}
\end{equation}
with exact rows
because of $C_0 \cap C_{\infty}=\emptyset$.
Then we can easily obtain an exact sequence
\begin{equation}
\begin{CD}
0 @>>> \cO_X(-C_0)
  @>{\varphi}>> \cO_X(C_{\infty}-C_0) \oplus \cO_X
  @>>> \cO_X(C_{\infty}) @>>> 0
\end{CD}
\end{equation}
from the commutative diagram above.
We set
\begin{equation}
A=\image(\varphi)
\subset \cO_X(C_{\infty}-C_0) \oplus \cO_X
\simeq F^b\widetilde{\cV}
=\gr_F^b\widetilde{\cV}
\subset \gr_F\widetilde{\cV},
\end{equation}
which is isomorphic to $\cO_X(-C_0)$.
We have
$\gr_F^b\widetilde{\cV}\bigl/A \simeq \cO_X(C_{\infty})$
by the exact sequence above.
We easily see the equalities
\begin{equation}
W_0\widetilde{\cV} \cap A=0, \quad
W_1\widetilde{\cV} \cap A=A
\end{equation}
by definition.
Therefore we have
\begin{equation}
\begin{split}
&\gr_m^WA=0 \quad \text{for $m \not= 1$} \\
&\gr_1^WA=A \simeq \cO_X(-C_0)
\subset \gr_1^W\gr_F^b\widetilde{\cV}=\cO_X
\end{split}
\end{equation}
where $W$ denotes the filtration on $A$
induced from $W$ on $\widetilde{\cV}$.
Hence $\gr^WA$ is contained in the kernel of the Higgs field
\begin{equation}
\theta:
\gr^W\gr_F\widetilde{\cV}
\longrightarrow
\Omega_X^1(\log D) \otimes \gr^W\gr_F\widetilde{\cV}
\end{equation}
because $\gr_1^W\gr_F^{b-1}\widetilde{\cV}=0$.
Thus $A$ satisfies the assumption
in Theorem 4.5 of \cite{Brunebarbe}.
However, the $\cO_X$-dual $A^{\ast} \simeq \cO_X(C_0)$
is not semipositive
because $C_0^2=-n < 0$
\end{exa}

\providecommand{\bysame}{\leavevmode\hbox to3em{\hrulefill}\thinspace}

\end{document}